\def\co{\colon\thinspace}
\DeclareMathAlphabet{\mathsfsl}{OT1}{cmss}{m}{sl}
\newcommand{\spin}{\mathrm{Spin}^c}
\newtheorem{thm}{Theorem}[section]
\newtheorem{lem}[thm]{Lemma}
\newtheorem{prop}[thm]{Proposition}
\newtheorem{conj}[thm]{Conjecture}
\newtheorem*{thm*}{Theorem}
\theoremstyle{definition}
\newtheorem{rem}[thm]{Remark}
\newtheorem{adde}[thm]{Addendum}
\begin{document}

\def\G{{\Gamma}}
  \def\d{{\delta}}
  \def\ci{{\circ}}
  \def\e{{\epsilon}}
  \def\l{{\lambda}}
  \def\L{{\Lambda}}
  \def\m{{\mu}}
  \def\n{{\nu}}
  \def\o{{\omega}}
  \def\O{{\Omega}}
  \def\Th{{\Theta}}\def\s{{\sigma}}
  \def\v{{\varphi}}
  \def\a{{\alpha}}
  \def\b{{\beta}}
  \def\p{{\partial}}
  \def\r{{\rho}}
  \def\ra{{\rightarrow}}
  \def\lra{{\longrightarrow}}
  \def\g{{\gamma}}
  \def\D{{\Delta}}
  \def\La{{\Leftarrow}}
  \def\Ra{{\Rightarrow}}
  \def\x{{\xi}}
  \def\c{{\mathbb C}}
  \def\z{{\mathbb Z}}
  \def\2{{\mathbb Z_2}}
  \def\q{{\mathbb Q}}
  \def\t{{\tau}}
  \def\u{{\upsilon}}
  \def\th{{\theta}}
  \def\la{{\leftarrow}}
  \def\lla{{\longleftarrow}}
  \def\da{{\downarrow}}
  \def\ua{{\uparrow}}
  \def\nwa{{\nwtarrow}}
  \def\swa{{\swarrow}}
  \def\nea{{\netarrow}}
  \def\sea{{\searrow}}
  \def\hla{{\hookleftarrow}}
  \def\hra{{\hookrightarrow}}
  \def\sl{{SL(2,\mathbb C)}}
  \def\ps{{PSL(2,\mathbb C)}}
  \def\qed{{\hspace{2mm}{\small $\diamondsuit$}\goodbreak}}
  \def\pf{{\noindent{\bf Proof.\hspace{2mm}}}}
  \def\ni{{\noindent}}
  \def\sm{{{\mbox{\tiny M}}}}
   \def\ll{{{\mbox{\tiny L}}}}
    \def\lq{{{\mbox{\tiny Q}}}}
   \def\sf{{{\mbox{\tiny F}}}}
   \def\sc{{{\mbox{\tiny C}}}}
  \def\ke{{\mbox{ker}(H_1(\partial M;\2)\ra H_1(M;\2))}}
  \def\et{{\mbox{\hspace{1.5mm}}}}
 \def\sk{{{\mbox{\tiny K}}}}
\def\sxz{{{\mbox{\tiny $X_0$}}}}
\def\sxo{{{\mbox{\tiny $X_1$}}}}
\def\sxi{{{\mbox{\tiny $X_i$}}}}
\def\sxt{{{\mbox{\tiny $X_2$}}}}

\title{Dehn surgery on knots in $S^3$ producing Nil Seifert fibred spaces}

\author{{Yi Ni}\\{\normalsize Department of Mathematics, Caltech}\\
{\normalsize 1200 E California Blvd, Pasadena, CA 91125}
\\{\small\it Email\/:\quad\rm yini@caltech.edu}
\\\\
{Xingru Zhang}
\\
{\normalsize Department of Mathematics,
University at Buffalo}\\
{\small\it Email\/:\quad\rm xinzhang@buffalo.edu}}

\date{}
\maketitle

\begin{abstract}We prove that there are exactly $6$ Nil Seifert fibred spaces which can be obtained by  Dehn surgeries  on non-trefoil knots in $S^3$, with $\{60, 144, 156, 288, 300\}$ as the exact set of all such surgery slopes up to taking the mirror images of the knots.
We conjecture  that there are exactly $4$ specific
hyperbolic knots in $S^3$ which admit  Nil Seifert fibred surgery.
We also give some more general results and a more general conjecture concerning Seifert fibred surgeries on hyperbolic knots in $S^3$.
\end{abstract}

\section{Introduction}\label{sect:Intro}

For a knot  $K$  in $S^3$, we denote by $S_K^3(p/q)$ the manifold obtained by Dehn surgery along  $K$ with slope $p/q$. Here the slope $p/q$
is parameterized by the standard meridian/longitude coordinates of $K$
and we always assume $\gcd(p,q)=1$.
 In this paper we study the problem of on which knots in $S^3$
with which slopes Dehn surgeries can produce Seifert fibred spaces
admitting   the Nil geometry.
Recall that every closed connected orientable Seifert fibred space
$W$ admits one of $6$ canonical geometries:
$S^2\times \mathbb R$, $\mathbb E^3$, $\mathbb H^2\times \mathbb R$,
$S^3$, $Nil$, $\widetilde{SL}_2(\mathbb R)$.
More concretely
if $e(W)$ denotes  the Euler number of $W$ and
 $\chi({\cal B}_W)$ denotes the orbifold Euler characteristic
 of the base orbifold ${\cal B}_W$
 of $W$, then the geometry of $W$ is uniquely determined
 by the values of $e(W)$ and $\chi({\cal B}_W)$
 according to the following table
   (cf. \S 4 of \cite{Scott}):
\vspace{.3cm}
\begin{table}[h]
\begin{center}
\begin{tabular}{|c||c|c|c|} \hline
 & $\displaystyle \chi({\cal B}_W) > 0$ & $\displaystyle \chi({\cal B}_W) = 0$ &
 $\displaystyle \chi({\cal B}_W) < 0$ \\  \hline
\hline
$\displaystyle e(W) = 0$     &   $\displaystyle \mathbb S^2 \times \mathbb R$
 &  $\displaystyle \mathbb E^3$ & $\displaystyle \mathbb H^2 \times \mathbb R$ \\ \hline
$\displaystyle e(W) \ne 0$       & $\displaystyle \mathbb S^3$    &
$\displaystyle Nil$ & $\displaystyle \widetilde{SL}_2(\mathbb R)$ \\ \hline
\end{tabular}
\caption{The type of geometry of a Seifert fibred space $W$}
\end{center}
\end{table}

Suppose that $S^3_K(p/q)$ is a Seifert fibred space
with Euclidean base orbifold.
A simple homology consideration shows that the base orbifold of $S^3_K(p/q)$ must be $S^2(2,3,6)$ -- the $2$-sphere with $3$ cone points of
 orders $2,3,6$ respectively.
The orbifold fundamental group of $S^2(2,3,6)$ is the triangle group
$\triangle(2,3,6)=\langle x,y; x^2=y^3=(xy)^6=1\rangle$, whose first homology is $\z/6\z$.
Thus $p$ is divisible by $6$.
If $p=0$, then $S^3_K(0)$ must be  a torus bundle.
By \cite{G3}, $K$ is a fibred knot with genus one. So $K$ is the trefoil knot or the figure 8 knot.
But the $0$-surgery on the figure 8 knot is a manifold with the Sol geometry.
So $K$ is the trefoil knot, which means that the trefoil knot is the only knot in $S^3$  and $0$ is the only slope which can produce a Seifert fibred space with the Euclidean geometry.
Therefore  we may assume that $p\ne 0$. Hence $S^3_K(p/q)$ is a  Seifert fibred space  with the Nil geometry.
It is known that on a hyperbolic knot $K$ in $S^3$, there is at most one
 surgery  which can possibly
produce a Seifert fibred space admitting the Nil geometry and if there is one, the surgery slope is integral \cite{Boyer}.
In this paper we show

\begin{thm}\label{euclidean}Suppose $K$ is a knot in $S^3$ which is not the  (righthanded or lefthanded) trefoil knot $T(\pm 3, 2)$.
Suppose that $S^3_K(p/q)$ is a Seifert fibred space admitting the Nil geometry (where we may assume
$p, q>0$ up to changing $K$ to its mirror image).
 Then $q=1$ and $p$ is one of the numbers
 $60, 144, 156, 288, 300$.
 Moreover we have  \newline
(1)  $S^3_K(60)\cong-S^3_{T(3,2)}(60/11)$,
 \newline
 (2)  $S^3_K(144)\cong-S^3_{T(3,2)}(144/23)$
 or    $ S^3_K(144)\cong S^3_{T(3,2)}(144/25)$,
 \newline
 (3)  $S^3_K(156)\cong S^3_{T(3,2)}(156/25)$,
 \newline
 (4)   $S^3_K(288)\cong S^3_{T(3,2)}(288/49)$,
 \newline
  (5)  $S^3_K(300)\cong S^3_{T(3,2)}(300/49)$,
  \newline
  where $\cong$ stands for orientation preserving homeomorphism.
\end{thm}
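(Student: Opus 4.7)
The plan has three parts: reduce to the hyperbolic case, enumerate the possible target Nil manifolds, and use Heegaard Floer obstructions to match them with trefoil surgeries.

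\textbf{Reductions.} First I would dispose of the non-hyperbolic cases. If $K$ is a non-trefoil torus knot $T(a,b)$, Moser's classification identifies $S^3_K(p/q)$ as a Seifert fibred space with base orbifold $S^2(|a|,|b|,|p-abq|)$, and since a Nil geometry forces the base to be $S^2(2,3,6)$ one concludes $\{|a|,|b|\} = \{2,3\}$, i.e.\ $K$ is a trefoil. If $K$ is a satellite knot, the classification of Seifert fibred surgeries on satellite knots (via Gordon and Miyazaki--Motegi) limits $K$ to a cable of a torus knot, and the cabling formula for Seifert invariants rules out the Nil case on an orbifold Euler characteristic computation. Thus we may assume $K$ is hyperbolic; by \cite{Boyer} the slope is then integral, $p/q = p \in \z$. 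Since the base is $S^2(2,3,6)$ with orbifold $\pi_1$ abelianizing to $\z/6$, we get $6 \mid p$, and $p > 0$ after mirroring.

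\textbf{Candidate Nil manifolds.} For each $p$ that could arise, I would enumerate all Nil Seifert fibred spaces $M$ over $S^2(2,3,6)$ with $|H_1(M)| = p$. These are parameterized by a short list of Seifert invariants $\{(\alpha_i,\beta_i)\}$ subject to $\chi(\mathcal{B}_M)=0$ and $e(M)\neq 0$. The trefoil surgeries $S^3_{T(\pm 3,2)}(p/q)$ that happen to be Nil Seifert fibred (determined from the standard cabling formula for $T(3,2)$) provide the five candidate homeomorphism types listed in the theorem; the content of the theorem is that \emph{no other} manifold in the list can arise as a knot surgery.

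\textbf{Heegaard Floer obstructions.} The core of the argument is to combine the Ozsv\'ath--Szab\'o correction terms $d$ with the knot Floer surgery formula. For Nil Seifert fibred spaces over $S^2(2,3,6)$, $HF^+$ is computable via the Ozsv\'ath--Szab\'o algorithm for plumbed three-manifolds, so one has explicit $d$-invariants and $\widehat{HF}_{\mathrm{red}}$ ranks across all $\spin$ structures. On the knot-theoretic side, the $d$-invariants and reduced groups of $S^3_K(p)$ are controlled by the torsion coefficients $t_i(K)$ of the symmetrized Alexander polynomial and by the knot Floer chain complex $CFK^\infty(K)$. Matching the two sides, $\spin$ structure by $\spin$ structure, should (i) force $p$ into the finite list $\{60,144,156,288,300\}$ and (ii) constrain $CFK^\infty(K)$ sufficiently that the Seifert data of $S^3_K(p)$ coincide with that of the corresponding trefoil surgery, yielding the stated orientation-preserving homeomorphism.

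\textbf{Main obstacle.} The difficult step is the Heegaard Floer matching: Nil manifolds are not L-spaces, so $\HFtildered$ contributes nontrivially and the $d$-invariants do not alone determine the Floer data. One must simultaneously control the ranks of $\HFtildered$ and the full vector of correction terms, then leverage the rigidity of knot Floer complexes of knots in $S^3$ (non-negativity and symmetry of Alexander coefficients, genus bounds, etc.) to eliminate all Nil candidates beyond the five trefoil ones. Carrying out this $\spin^c$ bookkeeping uniformly across the different residues of $p$ modulo the relevant orbifold multiplicities is where the bulk of the technical work lies.
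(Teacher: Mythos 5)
Your outline has the right general shape (reduce to integral surgeries, compare correction terms with trefoil surgeries), but it contains two substantive errors and is missing the one idea that actually makes the argument work. First, the satellite reduction is wrong: cables over $T(3,2)$ \emph{do} admit Nil Seifert fibred surgeries --- the $(29,5)$-, $(31,5)$-, $(41,7)$- and $(43,7)$-cables with slopes $144,156,288,300$ --- and these realize four of the six manifolds in the statement, so an ``orbifold Euler characteristic computation'' cannot rule them out; what is needed is a classification (as in Proposition~\ref{satellite}, via Gabai's solid-torus surgery results and \cite{Moser}), not an exclusion. Second, your ``main obstacle'' rests on a false premise: every Nil Seifert fibred space occurring here has base $S^2(2,3,6)$, hence by Lemma~\ref{23 or 34} is $\pm S^3_{T(3,2)}\bigl(\frac{6q\pm6}{q}\bigr)$ with $\frac{6q\pm 6}{q}>1=2g(T(3,2))-1$, so all of these manifolds are $L$-spaces by \cite{OSzRatSurg}. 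There is no $\HFtildered$ to control; on the contrary, the $L$-space property is exactly what reduces everything to the correction terms and the torsion coefficients $b_i$ of $\triangle_K$.

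The genuine gap is that ``matching the two sides should force $p$ into the finite list'' is asserted without any mechanism, and this is the entire content of the theorem. The paper's mechanism is as follows: the two $\spin$-parametrizations of $S^3_K(p)\cong\varepsilon S^3_{T(3,2)}(p/q)$ differ by an unknown affine isomorphism $\phi_a$ of $\mathbb Z/p\mathbb Z$, pinned down up to the parameter $a$ by equivariance under the conjugation $J$; setting $\delta^\varepsilon_a(i)=d(L(p,1),i)-\varepsilon d(S^3_{T(3,2)}(p/q),\phi_a(i))$, the surgery formula forces every consecutive difference $\delta^\varepsilon_a(i+1)-\delta^\varepsilon_a(i)$ to lie in $\{0,\pm2\}$, while the lens-space recursion (\ref{eq:CorrRecurs}) gives the closed form $\delta^\varepsilon_a(b+6k+1)-\delta^\varepsilon_a(b+6k)=Ak+B+C_k$ with $C_k$ ranging over a bounded set and (after a separate check, needed because $\sqrt{rl/Q}=6$ is an integer here) $A\neq0$. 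For $p$ large the linear growth in $k$ violates the $\{0,\pm2\}$ constraint, yielding an explicit upper bound on $p$, and only then does a finite computer search using (\ref{d invariant}) and (\ref{d invariants of L(p,q)}) produce the list $\{60,144,156,288,300\}$ and the identifications (1)--(5). Without this (or some substitute for it), your plan does not terminate in a finite list.
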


Furthermore under the assumptions  of Theorem~\ref{euclidean}, we have the following additional information:

 \begin{adde}\label{Addendum} (a) The knot $K$ is either a hyperbolic knot or a cable over $T(3,2)$ as given in Proposition \ref{satellite}.

(b) If Case (1) occurs, then $K$ is a hyperbolic knot and its Alexander polynomial is  either
\newline$\triangle_K(t)=1 - t - t^{-1} + t^2 + t^{-2} - t^4 - t^{-4} + t^5 + t^{-5} - t^6 -
 t^6 + t^7 + t^{-7} - t^8 - t^{-8} + t^9 + t^{-9} - t^{13} - t^{-13} +
  t^{14} + t^{-14}
 - t^{15} - t^{-15} + t^{16} + t^{-16} - t^{22} - t^{-22} + t^{23} +
 t^{-23}$, 
 \newline or
\newline$\triangle_K(t)= 1 - t^2 - t^{-2} + t^4 + t^{-4} - t^7 - t^{-7} + t^9 + t^{-9} -
 t^{12} - t^{-12} + t^{13} + t^{-13}
 - t^{16} - t^{-16} + t^{17} + t^{-17} - t^{21} - t^{-21} + t^{22} +
 t^{-22}$.

The two Berge knots which yield the lens spaces $L(61,13)$ and $L(59,27)$
respectively realize the Nil Seifert surgery with the
prescribed two Alexander polynomials respectively.
More explicitly these two Berge knots are given in
\cite{Berge}, page 6, with $a=5$ and $b=4$ in case of Fig. 8,
and with $b=9$ and $a=2$ in case of Fig. 9, respectively.

(c) If the former  subcase of Case (2) occurs, then $K$ is a hyperbolic knot and its Alexander polynomial
is 
\newline$\triangle_K(t)=1 - t - t^{-1} + t^2 + t^{-2} - t^4 - t^{-4} + t^5 + t^{-5} - t^6 -
 t^6 + t^7 + t^{-7} - t^9 - t^{-9} + t^{10} + t^{-10} - t^{11} -
 t^{-11} + t^{12} + t^{-12} - t^{14} - t^{-14}
 + t^{15} + t^{-15} - t^{16} - t^{-16} + t^{17} + t^{-17} - t^{19} -
 t^{-19} + t^{20} + t^{-20} - t^{21} - t^{-21} + t^{22} + t^{-22} -
 t^{24} - t^{-24}
 + t^{25} + t^{-25} - t^{26} - t^{-26} + t^{27} + t^{-27} - t^{29} -
 t^{-29} + t^{30} + t^{-30} - t^{34} - t^{-34} + t^{35} + t^{-35} -
 t^{39} - t^{-39}
 + t^{40} + t^{-40} - t^{44} - t^{-44} + t^{45} + t^{-45} - t^{49} -
 t^{-49} + t^{50} + t^{-50} - t^{54} - t^{-54} + t^{55} + t^{-55}$.\newline
This case is realized on the Eudave-Mu\~noz knot
$k(-2, 1, 6, 0)$ of \cite[Propositions 5.3 (1) and  5.4 (2)]{Eudave}, which is also a Berge knot on which the $143$--surgery yields $L(143,25)$.

(d) If the latter  subcase of Case (2) occurs, then $\triangle_K(t)=\triangle_{T(29,5)}(t)\triangle_{T(3,2)}(t^5)$.
If Case (4) or (5) occurs, then
   $\triangle_K(t)=\triangle_{T(41,7)}(t)\triangle_{T(3,2)}(t^7)$
or $\triangle_K(t)=\triangle_{T(43,7)}(t)\triangle_{T(3,2)}(t^7)$  respectively.
All these cases can  be realized on certain cables
over $T(3,2)$  as given
  in Proposition~\ref{satellite}.

(e) If Case (3) occurs, then either
$\triangle_K(t)=\triangle_{T(31,5)}(t)\triangle_{T(3,2)}(t^5)$
or\newline
$\triangle_K(t)=1-t^3-t^{-3}+t^4+t^{-4}-t^5-t^{-5}+t^6+t^{-6}-t^8-t^{-8}+t^9+t^{-9}-t^{10}-t^{-10}+t^{11}+t^{-11}-t^{13}-t^{-13}+t^{14}+t^{-14}-t^{15}-t^{-15}+t^{16}+t^{-16}-t^{18}-t^{-18}+t^{19}+t^{-19}-t^{20}-t^{-20}+t^{21}+t^{-21}-t^{23}-t^{-23}+t^{24}+t^{-24}-t^{25}-t^{-25}+t^{26}+t^{-26}-t^{28}-t^{-28}+t^{29}+t^{-29}-t^{30}-t^{-30}+t^{31}+t^{-31}-t^{35}-t^{-35}+t^{36}+t^{-36}-t^{40}-t^{-40}+t^{41}+t^{-41}-t^{45}-t^{-45}+t^{46}+t^{-46}-t^{50}-t^{-50}+t^{51}+t^{-51}-t^{55}-t^{-55}+t^{56}+t^{-56}-t^{60}-t^{-60}+t^{61}+t^{-61}$.\newline
The former subcase can be  realized on the $(31,5)$-cable
over $T(3,2)$, and the latter subcase
can be realized on the Eudave-Mu\~noz knot
$k(-3, -1, 7, 0)$, which is also a Berge knot on which the $157$--surgery yields $L(157,25)$.
\end{adde}

In other words there are exactly $6$ Nil Seifert fibred spaces which can
be obtained by Dehn surgeries
on non-trefoil knots in $S^3$ and there are exactly  $5$
 slopes for all such surgeries (while on the
trefoil knot $T(3,2)$,  infinitely many Nil Seifert fibred spaces can be obtained by Dehn surgeries,
in fact by \cite{Moser}, $S_{T(3,2)}(p/q)$ is a Nil Seifert fibred space if and only if
$p=6q\pm 6$, $p\ne 0$).
It seems reasonable to  raise the following conjecture.

\begin{conj}
If a hyperbolic knot $K$ in $S^3$
admits a surgery yielding a Nil Seifert fibred space,
then $K$ is one of the four hyperbolic Berge knots given in (b) (c) (e) of Addendum \ref{Addendum}.
\end{conj}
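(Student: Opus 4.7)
The plan is to promote the Alexander polynomial information from Addendum~\ref{Addendum} to a full identification of the knot itself, by combining Heegaard Floer homology, L-space knot technology, and a characterizing-slope argument.

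First, I would verify that every Nil Seifert fibred space $S^3_K(p)$ arising in Theorem~\ref{euclidean} is a Heegaard Floer L-space. Each such manifold is orientation preservingly or reversingly homeomorphic to $S^3_{T(3,2)}(p'/q')$ with $p'/q'>0$, and since $T(3,2)$ is an L-space knot every positive surgery on it is an L-space; L-space status is preserved by orientation reversal, so all six Nil manifolds are L-spaces. Consequently $K$, or its mirror, is an L-space knot, and by Ozsv\'ath-Szab\'o the knot Floer homology $\widehat{HFK}(K)$ is completely determined, as a bigraded $\mathbb{F}$-vector space, by the Alexander polynomial $\triangle_K(t)$ listed in the Addendum.

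Second, for each of the four hyperbolic Berge knots $K_B$ named in parts (b), (c), and (e) of Addendum~\ref{Addendum} --- the Berge knots yielding $L(61,13)$, $L(59,27)$, $L(143,25)$, and $L(157,25)$ --- the coincidence $\triangle_K=\triangle_{K_B}$ upgrades, via the previous step, to an isomorphism $\widehat{HFK}(K) \cong \widehat{HFK}(K_B)$ of bigraded groups. I would then invoke a characterizing-slope argument to identify the knots themselves: since $S^3_K(p)$ and $S^3_{K_B}(p)$ are the same Nil Seifert fibred manifold, with $\spin$ decorations pinned down by $d$-invariants extracted from $\widehat{HFK}$, and both knots are L-space knots of the same genus and fibredness type, results in the spirit of Gainullin, McCoy, and Ozsv\'ath-Szab\'o on characterizing slopes should force the knot exteriors to coincide, hence $K=K_B$ by Gordon-Luecke.

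The main obstacle is this final identification step. Knot Floer homology is not a complete invariant of knots in $S^3$, so promoting $\widehat{HFK}(K) \cong \widehat{HFK}(K_B)$ to a homeomorphism of exteriors requires genuine geometric input --- typically a characterizing-slope theorem tailored to the specific Berge knots in question, and no such theorem is currently known for all four of them. This, rather than the Floer-theoretic computation, is the essential content of the conjecture. A secondary difficulty is the bookkeeping of orientations and $\spin$ structures, since the Addendum records $\triangle_K$ only up to the symmetry $t \leftrightarrow t^{-1}$, so $\widehat{HFK}(K)$ and $\widehat{HFK}(K_B)$ must be matched carefully as absolutely graded invariants before the characterizing-slope machinery can be applied.
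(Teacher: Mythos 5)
The statement you are trying to prove is labelled a \emph{conjecture} in the paper, and the authors offer no proof of it; they only establish the weaker facts recorded in Theorem~\ref{euclidean} and Addendum~\ref{Addendum}, namely that the surgery slope, the resulting Nil manifold, and the Alexander polynomial of any such hyperbolic $K$ must agree with those of one of the four named Berge knots. Your proposal correctly reproduces that part of the story (the L-space observation, the determination of $\widehat{HFK}$ from $\triangle_K$ for an L-space knot), but it does not close the statement, and you say so yourself: the step from ``$K$ and $K_B$ have isomorphic knot Floer homology and share a common Nil Seifert fibred surgery with matching $\spin^c$/$d$-invariant data'' to ``$K=K_B$'' is precisely the open content of the conjecture. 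No characterizing-slope theorem covering these four Berge knots and these slopes ($60$, $144$, $156$) is available, and knot Floer homology is not a complete invariant, so nothing in your argument rules out a second hyperbolic knot with the same $\widehat{HFK}$ and the same Nil surgery. Invoking results ``in the spirit of Gainullin, McCoy, and Ozsv\'ath--Szab\'o'' is a hope, not a proof step.

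In short: this is a reasonable sketch of why the conjecture is plausible and of what a future proof might look like, but it is not a proof, and it could not be compared favourably or unfavourably with the paper's argument because the paper deliberately stops short of claiming this statement as a theorem. If you want to contribute something checkable here, the honest framing is to prove the conditional statement ``if $p$ is a characterizing slope for each of the four Berge knots, then the conjecture holds,'' and to flag the characterizing-slope hypothesis as the unproven input.
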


The method of proof of Theorem \ref{euclidean}
and Addendum \ref{Addendum}
follows that  given in \cite{LiNi} and \cite{Gu},
where similar results are obtained for Dehn surgeries on knots in $S^3$
yielding spherical space forms which are not lens spaces
or prism manifolds.
The main ingredient of the method  is the use  of the correction terms
(also known as the $d$-invariants) for rational homology spheres together with their spin$^c$ structures,
 defined in \cite{OSzAbGr}.
In fact with the same method we can go a bit further
to prove the following theorems.

\begin{thm}\label{23r or 34r}For each fixed $2$-orbifold $S^2(2,3,r)$
(or $S^2(3,4,r)$), where $r>1$ is an integer satisfying
$\sqrt{6r/Q}\notin \z$
(resp. $\sqrt{12r/Q}\notin \z$)  for each $Q=1,2,...,8$,  there are only finitely many slopes with which
Dehn surgeries on hyperbolic knots
in $S^3$ can produce Seifert fibred spaces with
$S^2(2,3,r)$
(resp. $S^2(3,4,r)$) as the base orbifold.
\end{thm}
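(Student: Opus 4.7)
The plan is to follow the correction-term method from the proof of Theorem \ref{euclidean}, which itself adapts the techniques of \cite{LiNi} and \cite{Gu}; I would treat only the $S^2(2,3,r)$ case, as the $S^2(3,4,r)$ case is handled in parallel with $12r$ in place of $6r$. Let $K\subset S^3$ be a hyperbolic knot with $W:=S^3_K(p/q)$ Seifert fibred over $S^2(2,3,r)$; after mirroring we may assume $p,q>0$. Since both the meridional slope $\infty$ and $p/q$ are exceptional slopes on the hyperbolic knot exterior $S^3\setminus K$, the Lackenby--Meyerhoff bound on distances between exceptional slopes gives $q=\Delta(\infty,p/q)\le 8$, so it suffices to bound $p$. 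Writing the Seifert invariants of $W$ as $\{e_0;(2,\beta_1),(3,\beta_2),(r,\beta_3)\}$, the homological identity $|H_1(W)|=6r\,|e(W)|=p$ provides the first link between $p$ and the Seifert data.

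The heart of the argument is a comparison of correction terms. After reorienting if needed, $W$ is presented by a negative-definite plumbing graph, and its $d$-invariants $d(W,\mathfrak{s})$ are then computable by the Ozsv\'ath--Szab\'o lattice algorithm as explicit functions of the Seifert data and the spin$^c$ structure $\mathfrak{s}$. Simultaneously, via the Ozsv\'ath--Szab\'o surgery formula, the $d$-invariants of $S^3_K(p/q)$ are expressed in terms of those of $L(p,q)$ and the torsion invariants $V_j(K), H_j(K)$ coming from $CFK^\infty(K)$. Equating the two expressions over all $\mathfrak{s}\in\spin(W)$ forces, for each relevant $j$, an equation $V_j(K)=\Phi(j,p,q,r,\beta_1,\beta_2,\beta_3)$, where $\Phi$ is an explicit (essentially quadratic in $j$) function.

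The genericity hypothesis $\sqrt{6r/Q}\notin\z$ for $Q=1,\dots,8$ is precisely what excludes the numerical resonances in the coefficients of $\Phi$ that, in the excluded case $r=6$, support the infinite family of Nil slopes on the trefoil $T(3,2)$. Granting this hypothesis and then imposing the universal constraints $V_j\ge V_{j+1}\ge 0$, $V_j-V_{j+1}\in\{0,1\}$, and $V_j=0$ for $j\ge g(K)$ on the prescribed values, the Diophantine system on $(p,q,\beta_1,\beta_2,\beta_3)$ becomes rigid and admits only finitely many solutions for each fixed $r$. The hardest step will be this last finiteness claim: in the proof of Theorem \ref{euclidean} the special value $r=6$ was small enough to allow a complete enumeration of slopes, but for general $r$ one must argue qualitatively, and ruling out the reappearance of square-root resonances arising from nontrivial combinations of the $\beta_i$ (now moving in addition to $p$) is the technical heart of the argument.
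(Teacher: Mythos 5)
Your setup is right as far as it goes: mirroring, the Lackenby--Meyerhoff bound $Q\le 8$ from \cite{LM}, and the general plan of equating two computations of the correction terms of $W$ and feeding the result into the constraints $V_j\ge V_{j+1}\ge V_j-1$, $V_j=0$ for $j\ge g(K)$, are all elements of the paper's argument. But there is a genuine gap at exactly the point you flag as ``the hardest step'': you never supply a mechanism converting the hypothesis $\sqrt{6r/Q}\notin\z$ into finiteness of $p$, and saying the Diophantine system ``becomes rigid'' is not an argument. The paper's mechanism is concrete. First, Lemma~\ref{23 or 34} identifies $W$ with $\varepsilon S^3_{T(3,2)}((6q+\zeta r)/q)$ for some $q>0$, so that \emph{both} descriptions of $W$ are surgeries on knots in $S^3$ and Proposition~\ref{prop:Corr} applies to both sides; this replaces your proposed plumbing/lattice computation and is what produces a uniform, explicitly quadratic formula for the $d$-invariants on the torus-knot side. (Your route via negative-definite plumbings is not obviously wrong, but you would still have to match the two spin$^c$ parametrizations, which the paper handles via the conjugation fixed points in Lemma~\ref{lem:J}, and which you do not address.) Second, and decisively, the proof of Theorem~\ref{thm:BoundSlope} examines the differences $\delta^{\varepsilon}_a(b+lk+1)-\delta^{\varepsilon}_a(b+lk)$ along an arithmetic progression of spin$^c$ structures and shows in Lemma~\ref{lem:Ak+B+C} that they equal $Ak+B+C_k$, where $A=\varepsilon\zeta\cdot\frac{2(al-mp)^2}{pr}+\frac{2l}{pQ}$ and $C_k$ takes only boundedly many values (bounded in terms of $r$, $l$, $Q$, $L$ alone). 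The hypothesis $\sqrt{rl/Q}\notin\z$ enters precisely to guarantee $A\ne 0$. Since each difference must be $0$ or $\pm2$ by (\ref{eq:Vi}) and (\ref{eq:Expected}), a linear function of $k$ with nonzero slope can land in $\{0,\pm2\}$ shifted by a bounded set of corrections for only boundedly many $k$; yet the admissible range (\ref{eq:kRange}) of $k$ grows like $\sqrt p$ (this requires the a priori estimate $|a-mp/l|<N\sqrt p$ of Lemma~\ref{lem:aEstimate}, another step absent from your sketch). This contradiction forces $p\le 4l^2N^2(3lMQr+2)^2$.

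Without some version of this ``linear growth versus bounded values'' argument --- or an equally concrete substitute --- your proposal does not prove finiteness; it reduces the theorem to an unproved claim. Note also that your worry about square-root resonances ``arising from nontrivial combinations of the $\beta_i$'' dissolves once Lemma~\ref{23 or 34} is in place: the Seifert invariants are absorbed into the single parameter $q$ of the torus-knot surgery, and the only resonance that matters is the one excluded by $\sqrt{rl/Q}\notin\z$.
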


\begin{thm}\label{surgeries on torus knots}
For each fixed torus knot $T(m,n)$, with $m\geq 2$ even, $n>1$, $\gcd(m,n)=1$,
and a fixed integer $r>1$ satisfying
$\sqrt{mnr/Q}\notin \z$ for each $Q=1,2...,8$, among all
Seifert fibred spaces $$\{S^3_{T(m,n)}(\frac{mnq\pm r}{q});\;q>0, \gcd(q, r)=1 \}$$
only finitely many of them can be obtained by
Dehn surgeries on hyperbolic knots in $S^3$.
\end{thm}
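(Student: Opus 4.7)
The plan is to adapt the $d$-invariant strategy used for Theorems~\ref{euclidean} and \ref{23r or 34r}, as developed in \cite{LiNi,Gu} on the basis of \cite{OSzAbGr}. Write $W_q = S^3_{T(m,n)}((mnq\pm r)/q)$ and suppose some hyperbolic knot $K\subset S^3$ satisfies $S^3_K(p'/q') \cong \epsilon W_q$ for some sign $\epsilon\in\{\pm 1\}$ and slope $p'/q'$. The goal is to show that only finitely many values of $q$ can arise.

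\textbf{Step 1 (Bounding the denominator on $K$).} Matching orders of first homology forces $|p'|=|mnq\pm r|$. The manifold $W_q$ is a small Seifert fibred space with base orbifold $S^2(m,n,r)$. By the Cyclic Surgery Theorem, together with the Boyer--Zhang constraints on Seifert fibred surgeries on hyperbolic knots, $|q'|$ must lie in a finite set that depends only on $(m,n,r)$ and not on $q$. This reduces the problem to finitely many ``strata'' indexed by the residue of the slope on $K$.

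\textbf{Step 2 ($d$-invariants of $W_q$).} Because $W_q$ is a torus-knot surgery, it carries an explicit negative-definite plumbing (with the plumbing tree reading off $T(m,n)$ and the slope), so all of its $d$-invariants are computable in closed form via the Ozsv\'ath--Szab\'o formula for plumbed 3-manifolds \cite{OSzAbGr}. Enumerating the $|mnq\pm r|$ many $\spin$ structures produces a family of rational numbers $\{d(W_q,\mathfrak{s})\}_\mathfrak{s}$ whose dependence on $q$ is explicit and polynomial of controlled degree in the leading term.

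\textbf{Step 3 (Comparison with $d$-invariants of $S^3_K(p'/q')$).} The surgery formula of \cite{OSzAbGr} expresses $d(S^3_K(p'/q'),\mathfrak{s})$ as $d(L(p',q'),\mathfrak{s}) - 2\, t_{i(\mathfrak{s})}(K)$, where $t_i(K)\ge 0$ are the torsion coefficients of $K$. Matching $\spin$ structures against the formulas from Step~2 gives a system of equalities relating the symmetric, non-negative integers $t_i(K)$ to the explicit functions of $q$ furnished by the plumbing calculation.

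\textbf{Step 4 (Arithmetic dichotomy for large $q$).} As in Theorem~\ref{23r or 34r}, the non-negativity and symmetry of the $t_i(K)$, together with the bound on $|q'|$ from Step~1, mean that the system of Step~3 admits solutions for at most finitely many $q$ unless certain numerical coincidences occur among the plumbing $d$-invariants of $W_q$. Each such coincidence, when traced through the formulas, forces $mnr/Q$ to be the square of an integer for some $Q\in\{1,\ldots,8\}$. The arithmetic hypothesis $\sqrt{mnr/Q}\notin \z$ for every such $Q$ rules out every one of these exceptional branches, so only finitely many $q$ can survive. The main obstacle is exactly this Step~4: the careful enumeration of the eight potential coincidences in the $d$-invariant comparison and the verification that each collapses to the integrality of one of $\sqrt{mnr/Q}$. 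This is the same bookkeeping carried out in the $S^2(2,3,r)$ and $S^2(3,4,r)$ argument, adapted to the torus-knot slopes $(mnq\pm r)/q$.
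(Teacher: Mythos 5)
Your outline identifies the right strategy and correctly predicts where the hypothesis $\sqrt{mnr/Q}\notin\z$ must enter: in the paper's argument the quantity that must not vanish is $A=\varepsilon\zeta\cdot\frac{2(al-mp)^2}{pr}+\frac{2l}{pQ}$ from Lemma~\ref{lem:Ak+B+C}, and $A=0$ forces $(al-mp)^2=lr/Q$ with $l=mn$, which is exactly the ``coincidence'' you describe in Step~4. The paper's actual proof is a two-line reduction: the denominator bound $Q\le 8$ comes from Lackenby--Meyerhoff \cite{LM} (any Seifert fibred surgery on a hyperbolic knot is exceptional), not from the Cyclic Surgery Theorem or Boyer--Zhang, which do not bound the denominator of a general Seifert fibred surgery; and then Theorem~\ref{thm:BoundSlope} is applied with $l=mn$ and $L=T(m,n)$. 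Two further technical points: your Step~3 invokes the surgery formula $d(S^3_K(p'/q'),i)=d(L(p',q'),i)-2t_{i}(K)$ with torsion coefficients, but that formula is only valid when the surgery is an $L$-space surgery, which need not hold for a general hyperbolic $K$; the paper instead uses the Ni--Wu invariants $V^{\mbox{\tiny K}}_i$ of Proposition~\ref{prop:Corr}, which obey the same non-negativity and monotonicity but apply to every knot. Also, the parity hypothesis ``$m$ even'' (so that $l=mn$ is even) is needed to control the fixed points of the conjugation $J$ and pin down the affine identification $\phi_a$ of $\spin$ structures; your proposal never uses it.

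The genuine gap is that Step~4 is asserted rather than proved. ``Non-negativity and symmetry of the $t_i$ imply the system has solutions for only finitely many $q$ unless coincidences occur'' is the entire content of the theorem, and nothing in your Steps~1--3 forces it. The mechanism in the paper is quantitative and has two stages that your outline does not supply: first, an a priori estimate $|a-\frac{mp}{l}|<N\sqrt p$ on the unknown affine isomorphism matching the two $\spin$ parametrizations (Lemma~\ref{lem:aEstimate}), obtained by comparing a single consecutive difference of correction terms with the constraint that it lie in $\{0,\pm2\}$; second, the computation that the differences $\delta^{\varepsilon}_a(b+lk+1)-\delta^{\varepsilon}_a(b+lk)$ form an expression $Ak+B+C_k$ that is linear in $k$ up to a bounded error $C_k$ taking at most $O(Qr)$ values (Lemma~\ref{lem:Ak+B+C}). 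Since each such difference must again be $0$ or $\pm2$ while $A\neq0$ by the non-square hypothesis, a linear function of $k$ can satisfy this for at most $3$ values of $k$ per value of $C_k$, which bounds $p$ explicitly. Without an argument of this type (in particular, without the $\sqrt p$ estimate on $a$ that makes the indices $i_k$, $j_k$ land in the range where the lens-space recursion applies), your Step~4 does not close, so the proposal as written is an accurate plan but not a proof.
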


The above results suggest a possible
phenomenon about Dehn surgery on hyperbolic  knots in
$S^3$ producing
Seifert fibred spaces, which we put forward in a form of
conjecture.

\begin{conj}
For every fixed $2$-orbifold $S^2(k,l, m)$, with all $k,l, m$ larger than $1$, there
are only finitely many slopes with which Dehn surgeries
on hyperbolic knots in $S^3$ can produce
Seifert fibred spaces with $S^2(k,l,m)$ as the base orbifold.
\end{conj}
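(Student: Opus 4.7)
The natural plan is to extend the correction-term strategy used in Theorems~\ref{23r or 34r} and~\ref{surgeries on torus knots} to a general triangle base $S^2(k,l,m)$. Fix $(k,l,m)$ and let $M$ be a Seifert fibred space with this base orbifold, with normalized Seifert invariants $(e_0;(k,\beta_1),(l,\beta_2),(m,\beta_3))$ where $0<\beta_1<k$, $0<\beta_2<l$, $0<\beta_3<m$. Then $|H_1(M;\z)|=klm\,|e(M)|$ with Seifert Euler number $e(M)=-(e_0+\beta_1/k+\beta_2/l+\beta_3/m)$. By the Boyer result recalled just before Theorem~\ref{euclidean}, any realization $S^3_K(p/q)\cong M$ on a hyperbolic knot $K$ forces $q=1$ once $\chi({\cal B}_M)\ne 0$; the three Euclidean bases $(2,3,6),(2,4,4),(3,3,3)$ must be handled separately, and Theorem~\ref{euclidean} already treats $(2,3,6)$. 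Thus we may write $p=klm\,|e(M)|$ with $q=1$. Since the residue triple $(\beta_1,\beta_2,\beta_3)$ takes at most $klm$ values, the conjecture reduces to proving that, for each fixed residue choice, only finitely many values of the integer $e_0$ occur as hyperbolic knot surgeries.

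The next step is to match correction terms on the two sides. On the Seifert side, $d(M,\tensor{t})$ is computable (after orientation-reversal if needed) from a negative-definite plumbing presentation of $M$ via the Ozsv\'ath--Szab\'o plumbing formula, and its dependence on $e_0$ is explicit, with leading data determined by $(k,l,m)$ and the chosen residues $(\beta_1,\beta_2,\beta_3)$. On the knot side, the Ozsv\'ath--Szab\'o integer-surgery formula expresses $d(S^3_K(p),\tensor{t})$ in terms of the torsion coefficients $t_i(K)$ of $K$ together with the reference values $d(L(p,1),\tensor{t})$; the $t_i(K)$ satisfy $t_i(K)=t_{-i}(K)$, $t_i(K)\geq 0$, and $t_i(K)=0$ for $|i|\geq g(K)$. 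Equating the two expressions for each spin$^c$ structure produces, for every residue choice, a rigid family of linear conditions on $\{t_i(K)\}$ parametrised by $e_0$, to be combined with further constraints from the Casson--Walker invariant and, where applicable, from Donaldson's theorem on the intersection form of the natural negative-definite plumbing.

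The key step, and the principal obstacle, is to prove a uniform rigidity lemma: for each residue choice, the above conditions are inconsistent for all but finitely many $e_0$. In Theorems~\ref{23r or 34r} and~\ref{surgeries on torus knots} this is powered by the arithmetic hypothesis $\sqrt{6r/Q}\notin\z$ (respectively $\sqrt{12r/Q}\notin\z$ and $\sqrt{mnr/Q}\notin\z$) for $Q\in\{1,\dots,8\}$, which forces the discrepancy between the two d-invariant expressions at a well-chosen spin$^c$ structure to be nonzero and to grow with $e_0$. To remove this hypothesis and settle the full conjecture, one would need a general nonvanishing statement: for every $(k,l,m)$ and every admissible residue triple, a lower bound on the Seifert-side d-invariant discrepancy that grows fast enough in $|e_0|$ to outstrip the $O(|e_0|)$ contribution that torsion coefficients can supply. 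Producing such a uniform estimate, or equivalently ruling out the conspiratorial cancellations currently hidden by the $Q$-dichotomy, is the essential difficulty I foresee; a viable route would be to combine the plumbing formula with a Chinese-remainder-type analysis of the residues modulo $klm$ to isolate a single spin$^c$ structure on which the leading term in $e_0$ provably cannot be matched by any knot-surgery d-invariant.
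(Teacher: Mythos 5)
The statement you are addressing is stated in the paper as a \emph{conjecture}; the authors do not prove it, and their own results (Theorems~\ref{23r or 34r} and~\ref{surgeries on torus knots}) establish only special cases under an explicit arithmetic hypothesis. Your proposal is likewise not a proof: it is a strategy outline that, by your own admission in the final paragraph, stops exactly at the step that makes the statement a conjecture rather than a theorem. Concretely, in the paper's mechanism (Lemma~\ref{lem:Ak+B+C}) the finiteness of slopes comes from the nonvanishing of the coefficient $A=\varepsilon\zeta\cdot\frac{2(al-mp)^2}{pr}+\frac{2l}{pQ}$, and $A=0$ is ruled out precisely by the hypothesis $\sqrt{rl/Q}\notin\z$ (since $A=0$ forces $(al-mp)^2=rl/Q$ with $al-mp$ an integer). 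When that hypothesis fails, as it does already for the Nil case $l=r=6$, $Q=1$, the authors must perform a separate, delicate computation with the explicit $d$-invariants of $L(6,q)$ to exclude $A=0$; there is no general argument, and your ``uniform rigidity lemma'' is exactly the missing ingredient, not a step you have supplied.

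Two further points in your setup would need repair even before reaching that obstacle. First, you invoke the result of \cite{Boyer} to conclude $q=1$ for any base orbifold with $\chi\ne 0$; the paper uses that result only for the Nil case, and for general Seifert fibred surgeries on hyperbolic knots the only denominator control available is $Q\le 8$ from \cite{LM}, which is what the proofs of Theorems~\ref{23r or 34r} and~\ref{surgeries on torus knots} actually use. Second, the paper's method hinges on Lemma~\ref{23 or 34}, which identifies the Seifert fibred space as a surgery on a torus knot in $S^3$; this works for $S^2(2,3,r)$ and $S^2(3,4,r)$ because the complement of the order-$r$ fiber is a torus knot exterior in $S^3$, but for a general triple $(k,l,m)$ no such identification exists, so one cannot feed the general case into Theorem~\ref{thm:BoundSlope} at all. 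Your alternative of computing the Seifert-side correction terms from a negative-definite plumbing is a reasonable substitute in principle, but it is only named, not carried out, and the resulting comparison with the knot-side surgery formula is exactly where the unproved nonvanishing estimate would have to live. In short: the approach is a sensible research program consistent with the paper's philosophy, but it contains a genuine and essential gap, and the statement remains a conjecture.
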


In the above conjecture we may assume that $\gcd(k,l,m)=1$.

After recall
some basic properties of the correction terms in
Section~\ref{section on correction terms},
 we give and prove a more general theorem in Section~\ref{section of finiteness}.
This theorem  together with its proof will
 be applied in the proofs of Theorem \ref{euclidean}, Addendum \ref{Addendum} and Theorems \ref{23r or 34r} and \ref{surgeries on torus knots}, which is the content of Section~\ref{section 234 and 34r}.

\vspace{5pt}\noindent{\bf Acknowledgements.}\quad  The first author was
partially supported by NSF grant
numbers DMS-1103976, DMS-1252992, and an Alfred P. Sloan Research Fellowship.


\section{Correction terms in Heegaard Floer homology}
\label{section on correction terms}

To any oriented rational homology $3$-sphere $Y$ equipped with a
 Spin$^c$ structure $\mathfrak s\in\spin(Y)$, there can be
assigned  a  numerical invariant
$d(Y, \mathfrak s)\in \mathbb Q$, called the {\it correction term}
 of $(Y, \mathfrak s)$,  which is derived  in  \cite{OSzAbGr} from Heegaard Floer  homology machinery.
 The correction terms satisfy the following symmetries:
\begin{equation}\label{eq:dSymm}
d(Y,\mathfrak s)=d(Y,J\mathfrak s),\quad d(-Y,\mathfrak s)=-d(Y,\mathfrak s),
\end{equation}
where $J\co \spin(Y)\to\spin(Y)$ is the conjugation.

Suppose that $Y$ is an oriented  homology $3$-sphere, $K\subset Y$  a knot,
let $Y_K(p/q)$ be the oriented  manifold obtained by Dehn surgery on $Y$ along $K$ with slope $p/q$, where the orientation of $Y_K(p/q)$ is induced from that of $Y-K$ which in turn is induced from the given orientation of $Y$.
There is an affine isomorphism $\sigma\co\mathbb Z/p\mathbb Z\to\spin(Y_K(p/q))$. See \cite{OSzAbGr,OSzRatSurg} for more details about the isomorphism.
We shall identify $\spin(Y_K(p/q))$ with $\mathbb Z/p\mathbb Z$ via $\sigma$
but with $\s$ suppressed,  writing
$(Y_K(p/q), i)$ for $(Y_K(p/q), \s(i))$.
Note here $i$ is mod (p) defined and sometimes it can appear as an integer
larger than or equal to $p$.
The following lemma is contained in \cite{OwSt1,LiNi}.

\begin{lem}\label{lem:J}
The conjugation  $J: Spin^c(Y_K(p/q))\ra Spin^c(Y_K(p/q))$ is
given by $$J(i)=p+q-1-i, \;\;\mbox{for $0\leq i<p+q$}.$$
\end{lem}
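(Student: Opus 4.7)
The plan is to follow the approach of \cite{OwSt1} and \cite{LiNi}, to which the statement is attributed. The result is essentially a bookkeeping computation once the parameterization $\sigma$ is understood.

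First, I would recall the construction of $\sigma$ from \cite{OSzRatSurg}. The two-handle cobordism $W\co Y \to Y_K(p/q)$ carries a generator $\hat{F}$ of $H_2(W;\z)$, and elements of $\spin(W)$ are parameterized by $\z$ via the pairing of their first Chern classes with $\hat{F}$. For each integer $i$, there is a unique $\mathfrak{x}_i \in \spin(W)$ whose Chern class pairing with $\hat{F}$ realizes a prescribed value in the characteristic coset (shifted from $2i$ by an offset involving $p$ and $q$), and the rule $i \mapsto \mathfrak{x}_i|_{Y_K(p/q)}$ descends to the isomorphism $\sigma\co \z/p\z \to \spin(Y_K(p/q))$.

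Next, since conjugation satisfies $c_1(J\mathfrak{s}) = -c_1(\mathfrak{s})$ and the parameterization expresses $i$ as an affine function of $\langle c_1(\mathfrak{x}_i),\hat{F}\rangle$, conjugation on the boundary corresponds to negating this pairing on $W$. Thus $J$ acts on integer labels as an affine involution $i \mapsto c - i \pmod{p}$ for some constant $c$. Identifying $c = p+q-1$ then comes down to computing the offset built into the definition of $\mathfrak{x}_i$; the assertion that equality holds literally (not just $\pmod{p}$) for $0 \le i < p+q$ merely records a canonical choice of representative.

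The main obstacle is pinning down the constant $c$ correctly. The structural content -- that $J$ acts as an affine involution $i \mapsto c - i$ -- is automatic from $c_1(J\mathfrak{s}) = -c_1(\mathfrak{s})$, but the specific value $c = p+q-1$ requires unwinding the sign and normalization conventions of \cite{OSzRatSurg}. In practice this reduces to a single explicit Chern-class computation on $W$, which is precisely what is carried out in \cite{OwSt1,LiNi}; the present lemma merely repackages their formula in a form convenient for the $d$-invariant calculations to follow.
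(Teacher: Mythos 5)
The paper offers no proof of this lemma at all---it simply records that the statement ``is contained in \cite{OwSt1,LiNi}''---so there is no in-paper argument to compare yours against. Your structural observation is correct and is indeed the heart of the matter: since $c_1(J\mathfrak s)=-c_1(\mathfrak s)$ and the labeling is affine in a Chern-class evaluation, $J$ must act as $i\mapsto c-i$ for some constant $c$, and the entire content of the lemma is the value $c=p+q-1$. But your proposal stops exactly where the lemma begins: you never compute $c$, and instead defer that ``single explicit Chern-class computation'' back to the very references the paper cites. As a standalone proof this is an outline with the essential step missing; as a reconstruction of the literature it is at the same level of detail as the paper's own citation.

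There is also a technical inaccuracy in your description of $\sigma$ that would bite if you tried to carry out the computation. For $q>1$ there is no two-handle cobordism $W\co Y\to Y_K(p/q)$: attaching a single $2$-handle along $K$ produces an \emph{integral} surgery. In \cite{OSzRatSurg} the affine identification $\spin(Y_K(p/q))\cong\z/p\z$ for rational surgeries is set up via relative $\spin$ structures on the knot complement (equivalently, through the mapping-cone formula, or by realizing $p/q$-surgery as integral surgery on a link), and the constant $p+q-1$---note that it depends on $q$, which a labeling pulled back from a single cobordism to the ambient surgered manifold would not naturally see---comes out of that combinatorial labeling. Any honest derivation of the constant has to engage with it, which is what \cite{OwSt1} and \cite{LiNi} do; the cobordism picture you describe would at best recover the case $q=1$, where the formula reduces to $J(i)=p-i\equiv-i\pmod p$.
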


For a positive integer $n$ and an integer $k$ we use
$[k]_n\in \z/n\z$ to denote  the congruence
class of $k$ modulo $n$.

Let $L(p,q)$ be the lens space obtained by $p/q$--surgery on the unknot in $S^3$.
The correction terms for lens spaces can be computed inductively as in \cite{OSzAbGr}:
\begin{eqnarray}\label{d invariants of L(p,q)}
d(S^3,0)&=&0,\nonumber\\
d(L(p,q),i)&=&-\frac14+\frac{(2i+1-p-q)^2}{4pq}-d(L(q,[p]_q),[i]_q),
\;\mbox{for  $0\le i<p+q$.}\label{eq:CorrRecurs}
\end{eqnarray}

For a knot $K$ in $S^3$, write its Alexander polynomial in the following standard form:
$$\triangle_K(t)=a_0+\sum_{i\geq 1} a_i(t^i+t^{-i}).$$
For $i\geq 0$, define
 $$b_i=\sum_{j=1}^{\infty}ja_{i+j}.$$
Note that the $a_i$'s can be recovered from the $b_i$'s by the following formula

\begin{equation}\label{ai from bi}
a_i=b_{i-1}-2b_i+b_{i+1},\; \mbox{for $i>0$}.
\end{equation}

By \cite{OSzRatSurg} \cite{RasThesis}, if $K\subset S^3$ is a knot
on which some Dehn surgery produces an $L$-space, then the $b_i$'s for $K$
satisfy the following properties:
\begin{equation}
b_i\geq 0, \; b_i\geq b_{i+1}\geq b_i-1, \; b_i=0\; \mbox{for $i\geq g(K)$}
\end{equation}
and  if $S^3_K(p/q)$
is an $L$-space, where $p, q>0$,
then for $0\leq i\leq p-1$,
\begin{equation}\label{d invariant}d(S^3_K(p/q), i)=d(L(p,q), i)-2b_{\min\{\lfloor\frac{i}q\rfloor,\lfloor\frac{p+q-i-1}q\rfloor\}}.
\end{equation}
This surgery formula has been  generalized in \cite{NiWu}
to one that applies to any knot in $S^3$ as follows.
Given any knot  $K$  in $S^3$, from the knot Floer chain complex,
there is a uniquely defined sequence of  integers $V_i^\sk$, $i\in\mathbb Z$, satisfying
\begin{equation}\label{eq:Vi}
V_i^\sk\geq 0, V_i^\sk\geq V^\sk_{i+1}\geq V^\sk_i-1, \quad V_i^\sk=0\; \mbox{for $i\geq g(K)$}
\end{equation}
and the following surgery formula holds

\begin{prop}\label{prop:Corr}
When $p,q>0$,
$$d(S^3_{K}(p/q),i)=d(L(p,q),i)-2V^\sk_{\min\{\lfloor\frac{i}q\rfloor,\lfloor\frac{p+q-i-1}q\rfloor\}}$$
for  $0\le i\le p-1$. \end{prop}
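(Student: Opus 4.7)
The plan is to prove this surgery formula via the Ozsv\'ath--Szab\'o mapping cone description of $HF^+$ of rational surgeries, following the strategy that Ni and Wu used to extend the $L$--space surgery formula (\ref{d invariant}) to arbitrary knots. First I would recall the relevant pieces of the knot Floer package. From $CFK^\infty(K)$ one extracts the $\z[U]$--chain complexes
\[
A_s^+ = C\{\max(i, j-s) \geq 0\}, \qquad B^+ = C\{i \geq 0\} \simeq CF^+(S^3),
\]
together with chain maps $v_s^+, h_s^+\co A_s^+ \to B^+$, where $v_s^+$ is projection and $h_s^+$ is projection followed by a chain homotopy equivalence to $B^+$ realizing the $j$--filtration. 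Define $V_s^\sk \geq 0$ to be the integer such that $v_s^+$ sends the bottom $U$--tower of $A_s^+$ isomorphically onto $U^{V_s^\sk}\cdot(\text{tower of }B^+)$, and define $H_s^\sk$ analogously from $h_s^+$. The basic properties (\ref{eq:Vi}) follow from elementary features of $CFK^\infty$, and the conjugation symmetry of the knot Floer complex yields the key identity $H_s^\sk = V_{-s}^\sk$.

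Next I would invoke the rational surgery mapping cone formula of Ozsv\'ath--Szab\'o: for $p,q>0$ and each $i\in\z/p\z$, $HF^+(S^3_K(p/q), i)$ is computed as the homology of a mapping cone
\[
\mathbb{X}^+_{i,p/q} \co \bigoplus_{s\equiv i\,(p)} A_{\lfloor s/q\rfloor}^+ \;\xrightarrow{\;D^+_{i,p/q}\;}\; \bigoplus_{s\equiv i\,(p)} B^+,
\]
where on each summand the component of $D^+_{i,p/q}$ is $v_{\lfloor s/q\rfloor}^+$ into the $B^+$ with the same $s$ and $h_{\lfloor s/q\rfloor}^+$ into the $B^+$ with index $s+p$. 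Since only finitely many $V_{\lfloor s/q\rfloor}^\sk$ and $H_{\lfloor s/q\rfloor}^\sk$ are nonzero, the mapping cone has a well-defined bottom $U$--tower whose top generator sits in some rational grading, and that grading is by definition $d(S^3_K(p/q), i)$.

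The heart of the computation is then a direct tracking of the bottom tower through the cone. Pick the two indices $s_- < s_+$ in the residue class $i \pmod p$ that straddle the critical value where the $v$-- and $h$--towers balance; the tower of the mapping cone is then generated by a cycle built from the towers of the $A^+$ summands at $s_-$ and $s_+$ glued through all intermediate $B^+$'s, and its grading lies $2\min(V_{\lfloor s_-/q\rfloor}^\sk,\, H_{\lfloor s_+/q\rfloor}^\sk)$ below the grading of the corresponding tower in the mapping cone associated to the unknot (whose homology computes $HF^+(L(p,q), i)$ and gives $d(L(p,q), i)$). Applying the symmetry $H_s^\sk = V_{-s}^\sk$ and simplifying
$\min(V_{\lfloor s_-/q\rfloor}^\sk, V_{-\lfloor s_+/q\rfloor}^\sk) = V_{\min(\lfloor s_-/q\rfloor,\, -\lfloor s_+/q\rfloor)}^\sk$ (since $V_t^\sk$ is non-increasing in $t$) yields the claimed formula once $s_\pm$ are written in terms of $i$.

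The main obstacle is the Spin$^c$--labeling bookkeeping. The mapping cone naturally labels its summands by an integer $s$, while the statement of the proposition uses the affine identification $\sigma\co\z/p\z\to \spin(S^3_K(p/q))$ normalized so that Lemma~\ref{lem:J} holds. I would carefully verify that under $\sigma$, the Spin$^c$ structure labeled $i\in\{0,\dots,p-1\}$ corresponds to the residue class in the mapping cone for which $s_- = i$ and $s_+ = i - p$ (after an appropriate shift by $(p+q-1)/2$), so that $\lfloor s_-/q\rfloor = \lfloor i/q\rfloor$ and $-\lfloor s_+/q\rfloor = \lfloor (p+q-i-1)/q\rfloor$. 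Once this matching is pinned down, which is the content of the isomorphism $\sigma$ in \cite{OSzAbGr,OSzRatSurg}, the proposition reduces to the min--formula identified above, and the proof is complete.
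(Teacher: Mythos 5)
The paper itself offers no proof of Proposition~\ref{prop:Corr}: it is imported verbatim from \cite{NiWu}. Your outline --- the Ozsv\'ath--Szab\'o rational surgery mapping cone, the integers $V^\sk_s$, $H^\sk_s$ extracted from $v_s^+$, $h_s^+$, the conjugation symmetry $H^\sk_s=V^\sk_{-s}$, and the bookkeeping that matches the mapping-cone index to the affine parametrization of $\spin$ normalized by Lemma~\ref{lem:J} --- is exactly the strategy of that reference, so the architecture is the right one.

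There is, however, a genuine error at the heart of the computation. The bottom of the $U$--tower of the mapping cone sits $2\max\bigl\{V^\sk_{\lfloor i/q\rfloor},\,H^\sk_{\lfloor (i-p)/q\rfloor}\bigr\}$ below the corresponding lens space value, not $2\min$ as you assert: a tower class in the cone must be compatible simultaneously with the $v$--edge out of one extreme $A^+$--summand and the $h$--edge out of the other, so the \emph{larger} of the two $U$--powers is the obstruction. You then compound this with the identity $\min(V^\sk_a,V^\sk_c)=V^\sk_{\min(a,c)}$ ``since $V^\sk_t$ is non-increasing,'' which is backwards: for a non-increasing sequence $\min(V^\sk_a,V^\sk_c)=V^\sk_{\max(a,c)}$ and $\max(V^\sk_a,V^\sk_c)=V^\sk_{\min(a,c)}$. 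The two mistakes cancel, which is why you land on the stated formula, but the argument as written derives it from a false intermediate claim. A quick sanity check exposes the problem: take $p=q=1$, $i=0$, $K=T(3,2)$, for which $V^\sk_0=1$ and $H^\sk_{-1}=V^\sk_1=0$. Your intermediate claim would give $d(S^3_K(1))=-2\min(V^\sk_0,H^\sk_{-1})=0$, whereas $S^3_{T(3,2)}(1)$ is the Poincar\'e sphere with $d=-2=-2\max(V^\sk_0,H^\sk_{-1})$, consistent with the proposition's $-2V^\sk_{\min\{0,1\}}=-2V^\sk_0$. Replacing $\min$ by $\max$ in the tower computation and correcting the monotonicity step makes the proof go through as in \cite{NiWu}.
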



\section{Finitely many slopes}\label{section of finiteness}

Theorems \ref{euclidean}, \ref{23r or 34r} and \ref{surgeries on torus knots} will follow from the following more general theorem and its proof.

\begin{thm}\label{thm:BoundSlope}
Let $L$ be a given knot in $S^3$, and $r,l,Q$ be given  positive integers satisfying \begin{equation}\label{eq:NotSquare}
\sqrt{\frac{rl}Q}\notin \mathbb Z.
\end{equation}
Suppose further that $l$ is even. Then
there exist only finitely many positive integers $q$, such that $S^3_L(\frac{lq\pm r}q)$ is homeomorphic to  $S^3_K(\frac{lq\pm r}{Q})$
 for a knot $K$ in $S^3$.
\end{thm}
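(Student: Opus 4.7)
The plan is to assume, for contradiction, that for infinitely many positive integers $q$ the stated homeomorphism $S^3_L(p/q)\cong S^3_K(p/Q)$ holds for some knot $K\subset S^3$ (with $p=lq\pm r$), and then deduce from the correction-term machinery that $\sqrt{rl/Q}$ must be an integer, contradicting (\ref{eq:NotSquare}). Fix one such $q$. An orientation-preserving homeomorphism $S^3_L(p/q)\cong S^3_K(p/Q)$ induces, via the affine identifications of $\spin(\cdot)$ with $\z/p\z$, an affine bijection $\phi(i)=ai+b$ on $\z/p\z$ that preserves correction terms. Compatibility with $J$-conjugation (Lemma~\ref{lem:J}) forces the congruence $a(q-1)+2b\equiv Q-1\pmod{p}$; the hypothesis that $l$ is even keeps the parity of $p$ constant as $q$ varies, which ensures this congruence is solvable uniformly. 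Moreover, the isomorphism of linking forms induced by the homeomorphism gives $Qa^2\equiv q\pmod{p}$. Proposition~\ref{prop:Corr} then yields, for each $i\in\{0,1,\ldots,p-1\}$,
\[
d(L(p,q),i)-2V^L_{\alpha(i)} \;=\; d(L(p,Q),\phi(i))-2V^K_{\beta(\phi(i))},
\]
with $\alpha,\beta$ the index functions of Proposition~\ref{prop:Corr}.

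Since $L$ is fixed, $V^L_j=0$ for $j\geq g(L)$, so the $V^L$-term vanishes on the large range $qg(L)\leq i\leq p-qg(L)-q+1$. The $V^K$-values are a priori unknown, but on any single $Q$-block of $\phi(i)$-values the value of $V^K_{\beta(\phi(i))}$ is constant. Group the admissible $i$'s by the $Q$-block of $\phi(i)$ together with the residue classes $[i]_q$ and $[\phi(i)]_Q$, and pair two such indices $i,i'$ (such pairs exist by pigeonhole once $p/(qQ)\to\infty$). Subtracting the two identities eliminates the unknown $V^K$. Unfolding the lens-space correction terms via (\ref{eq:CorrRecurs}) and observing that the residual contributions $d(L(q,[p]_q),[i]_q)$ and $d(L(Q,[p]_Q),[\phi(i)]_Q)$ cancel in the difference (since the relevant residues agree), one is left with
\[
\frac{(2i+1-p-q)^2-(2i'+1-p-q)^2}{4pq} \;=\; \frac{(2\phi(i)+1-p-Q)^2-(2\phi(i')+1-p-Q)^2}{4pQ}.
\]

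Expanding, using $\phi(i)-\phi(i')=a(i-i')$, and clearing denominators, this becomes an integer polynomial identity in $q,i,i',a,b$ and the selected residues. Passing to a subsequence along which $a$, $b$, and the selected residues stabilize modulo a fixed integer, the identity must in fact persist as a polynomial identity in $q$ for infinitely many $q$; matching the leading coefficient in $q$ (combined with $Qa^2\equiv q\pmod{p}$) forces $rl/Q$ to be the square of a rational number, hence of an integer, contradicting (\ref{eq:NotSquare}). The main obstacle is this last algebraic reduction: carefully tracking how the affine parameters $a,b$, the sign $\pm r$, the choice of $Q$-block, and the paired residues interact inside the polynomial identity, and isolating the single clean conclusion $\sqrt{rl/Q}\in\z$ from a multi-parameter calculation. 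The evenness of $l$ enters not only in the parity of $p$ but also in making the relevant $2b$ term admissible, so that the stabilization argument can genuinely pick out infinitely many consistent $q$'s along which to apply the polynomial-identity step.
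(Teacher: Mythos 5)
Your setup (the affine identification $\phi$ of $\spin$ structures compatible with the conjugation $J$ of Lemma~\ref{lem:J}, and the use of Proposition~\ref{prop:Corr} together with the lens-space recursion) matches the paper's, but the central mechanism of your argument does not work. To cancel the unknown term $V^K_{\beta(\phi(i))}$ you pair indices $i,i'$ whose images $\phi(i),\phi(i')$ lie in the same $Q$-block (so that $\beta(\phi(i))=\beta(\phi(i'))$) \emph{and} satisfy $[\phi(i)]_Q=[\phi(i')]_Q$ (so that the residual terms $d(L(Q,[p]_Q),[\phi(\cdot)]_Q)$ in the recursion cancel). These two requirements together force $\phi(i)=\phi(i')$, hence $i=i'$: there are no nontrivial pairs, and the pigeonhole step is vacuous. (Pairing $\phi(i')$ with the $J$-conjugate $p+Q-1-\phi(i)$ also equalizes $\beta$, but then the resulting identity reduces to the trivial symmetry $d(Y,\mathfrak s)=d(Y,J\mathfrak s)$ and carries no information.) The paper avoids exact cancellation of $V^K$ altogether: it only uses that consecutive values of $V^K$ differ by $0$ or $1$, so that $\delta^{\varepsilon}_a(i+1)-\delta^{\varepsilon}_a(i)\in\{0,\pm2\}$, and then shows that this difference, computed purely from $L$ and the lens-space recursion along the arithmetic progression $i=b+lk$, has the form $Ak+B+C_k$ with $C_k$ taking boundedly many values and with $A\ne0$ precisely because of (\ref{eq:NotSquare}); for large $p$ the progression must then leave $\{0,\pm2\}$, which bounds $p$.

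A second gap: your final ``polynomial identity in $q$'' step presupposes control of the affine parameter $a$, which a priori is an arbitrary unit modulo $p$ with $p\to\infty$; passing to a subsequence on which $a$ stabilizes modulo a fixed integer does not control the terms such as $a^2/(pq)$ that appear after unfolding the recursion, so there is no genuine polynomial identity in $q$ to compare coefficients of. The paper supplies exactly the missing control in Lemma~\ref{lem:aEstimate}, which shows $|a-mp/l|<N\sqrt p$ for a bounded integer $m$, and it is the resulting quantity $(al-mp)^2$ that is played off against $rl/Q$ to make hypothesis (\ref{eq:NotSquare}) bite (via $A\ne0$). Without an analogue of that estimate, and with the pairing step broken, the proposal does not yield the theorem.
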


\begin{rem}
The condition that $l$ is even is not essential. We require this condition to simplify our argument. The condition (\ref{eq:NotSquare}) does not seem to be essential  either.
\end{rem}

We now proceed to prove Theorem \ref{thm:BoundSlope}.
Let each of  $\zeta$ and $\varepsilon$ denote an element in  $\{1,-1\}$, and let $p=lq+\zeta r$. We may assume that $p$ is positive (as long as  $q>r/l$).
Assume that
\begin{equation}
S^3_K(\frac pQ)\cong\varepsilon   S^3_L(\frac pq),
\end{equation}
where $\e\in\{\pm 1\}$ indicts the orientation and ``$\cong$'' stands for orientation preserving homeomorphism.
Then the two sets $$\{d(S^3_K(p/Q),i)|\:i\in\mathbb Z/p\mathbb Z\}, \quad\{d(\varepsilon S^3_L(p/q),i)|\:i\in\mathbb Z/p\mathbb Z\}$$
are of course  equal, but the two parametrizations for $\spin$ may not be equal: they could differ by an affine isomorphism of $\mathbb Z/p\mathbb Z$,
that is, there exists an affine isomorphism $\phi\co\mathbb Z/p\mathbb Z\to\mathbb Z/p\mathbb Z$, such that
$$d(S^3_{K}(\frac pQ),i)=d(\varepsilon S^3_L(\frac pq),\phi(i)),\;\;\mbox{for $i\in \mathbb Z/p\mathbb Z$}.$$
By Lemma~\ref{lem:J},
the fixed point set of the conjugation isomorphism
 $J\co \spin(S^3_K(p/Q))\to \spin(S^3_K(p/Q))$ is
$$\{\frac{Q-1}2,\frac{p+Q-1}2\big\}\cap\mathbb Z$$
and likewise the fixed point set of
 $J\co \spin(\varepsilon S^3_L(p/q))\to \spin(\varepsilon S^3_L(p/q))$ is
$$\{\frac{q-1}2,\frac{p+q-1}2\big\}\cap\mathbb Z.$$
As $J$  and $\phi$ commute, we must have
 $$\phi(\{\frac{Q-1}2,\frac{p+Q-1}2\big\}\cap\mathbb Z)=\{\frac{q-1}2,\frac{p+q-1}2\big\}\cap\mathbb Z.$$
It follows that the affine isomorphism $\phi:\mathbb Z/p\mathbb Z\to\mathbb Z/p\mathbb Z$ is of the form
\begin{equation}\label{eq:phi}
\phi_a(i)=[a(i-b)+\frac{(1-\alpha)p+q-1}2]_p
\end{equation}
where $b$ is an element of  $\{\frac{Q-1}2,\frac{p+Q-1}2\big\}\cap\mathbb Z$,
$\alpha=0$ or $1$,  and $a$ is an integer satisfying $0<a<p$, $\gcd(a,p)=1$.
By (\ref{eq:dSymm}) and Lemma~\ref{lem:J}, $d(\varepsilon S^3_L(p/q),\phi_a(i))=d(\varepsilon S^3_L(p/q),\phi_{p-a}(i))$.
So we may further assume that
\begin{equation}\label{eq:aCond}
0<a<\frac p2,\quad \gcd(p,a)=1.
\end{equation}

Let
\begin{equation}\label{eq:delta_a}
\delta^{\varepsilon}_a(i)=d(L(p,Q),i)-\varepsilon d(S^3_L(p/q),\phi_a(i)).
\end{equation}
By Proposition~\ref{prop:Corr}, we  have, when $q>r/l$ (so that $p>0$),
\begin{equation}\label{eq:Expected}
\delta^{\varepsilon}_a(i)=2V^\sk_{\text{min}\{\lfloor \frac{i}{Q} \rfloor,\lfloor \frac{p+Q-1-i}{Q} \rfloor \}}.
\end{equation}

Let $m\in\mathbb Z$ satisfy that
$$0\le a+\frac{(1-\alpha)\zeta r+q-1}2-mq<q,$$
then as $0<a<p/2$, we have $0\le m\le \frac l2$ when $q>2r$.

Let
$$\kappa(i)=\min\left\{\lfloor\frac{i}q\rfloor,\lfloor\frac{p+q-1-i}q\rfloor\right\}.$$

Using Proposition~\ref{prop:Corr} and (\ref{eq:delta_a}), we get
\begin{eqnarray}
\delta^{\varepsilon}_a(i)&=&d(L(p,Q),i)-\varepsilon d(S^3_L(p/q),\phi_a(i))\nonumber\\
&=&d(L(p,Q),i)-\varepsilon d(L(p,q),\phi_a(i))+2\varepsilon V^\ll_{\kappa(\phi_a(i))}.\label{eq:delta^eps}
\end{eqnarray}

\begin{lem}\label{lem:aEstimate}With the  notations and conditions
established above, there exists a constant $N=N(r,l,Q,L)$, such that
$$\left|a-\frac{mp}l\right|<N\sqrt{p}$$
for all $q>2r$.
\end{lem}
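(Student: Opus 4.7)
Let $\kappa_Q(i):=\min\{\lfloor i/Q\rfloor,\lfloor(p+Q-1-i)/Q\rfloor\}$ denote the $Q$-analogue of $\kappa$. Combining (\ref{eq:Expected}) with (\ref{eq:delta^eps}) yields the identity
\[
d(L(p,Q),i)-\varepsilon\,d(L(p,q),\phi_a(i)) = 2V^{\sk}_{\kappa_Q(i)}-2\varepsilon V^{\ll}_{\kappa(\phi_a(i))}.
\]
The plan is to evaluate this identity at the two indices $i=b$ and $i=b+l$, subtract, and extract a quadratic constraint on $\rho:=la-mp$; since $|a-mp/l|<N\sqrt{p}$ is equivalent to $|\rho|<Nl\sqrt{p}$, this suffices. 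Both lens-space $d$-invariants are first expanded via (\ref{eq:CorrRecurs}): one step gives $d(L(p,Q),i)=\tfrac{(2i+1-p-Q)^2}{4pQ}+O_Q(1)$, while two steps applied to $d(L(p,q),j)$ — using that $[p]_q\in\{r,q-r\}$ with $r$ fixed — yield
\[
d(L(p,q),j)=\tfrac{(2j+1-p-q)^2}{4pq}-\tfrac{(2[j]_q+1-q-[p]_q)^2}{4q[p]_q}+O_r(1).
\]

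Consider the representative case $b=(Q-1)/2$, $\alpha=0$, $\zeta=+1$, so $[p]_q=r$ and $p=lq+r$ (the other sign choices are treated analogously). Then $\phi_a(b)=(p+q-1)/2$, and at $i=b$ the quantities $X:=2i+1-p-Q$, $Y:=2\phi_a(i)+1-p-q$, $Z:=2[\phi_a(i)]_q+1-q-r$ evaluate to $X=-p$, $Y=0$, $Z=0$. At $i=b+l$, under the temporary assumption that the wrap index equals $k(b+l)=m$ and $|\rho|\le(q-r)/2$, one has $\phi_a(b+l)=(p+q-1)/2+\rho$, giving $X=2l-p$, $Y=2\rho$, $Z=2\rho$. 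Subtracting the two evaluations and invoking the identity $\tfrac{1}{qr}-\tfrac{1}{pq}=\tfrac{p-r}{pqr}=\tfrac{lq}{pqr}=\tfrac{l}{pr}$ — in which $p=lq+r$ is essential — to collapse the $\rho^2$ contributions to $\varepsilon\rho^2 l/(pr)$, the difference of identities becomes
\[
\tfrac{l^2}{pQ}-\tfrac{l}{Q}+\varepsilon\,\tfrac{\rho^2 l}{pr}=2\bigl(V^{\sk}_{\kappa_Q(b+l)}-V^{\sk}_{\kappa_Q(b)}\bigr)-2\varepsilon\bigl(V^{\ll}_{\kappa(\phi_a(b+l))}-V^{\ll}_{\kappa(\phi_a(b))}\bigr)+O_{r,Q}(1).
\]
The right-hand side is bounded by a constant depending only on $r,l,Q,L$: by the monotonicity (\ref{eq:Vi}), $|V^{\sk}_{\kappa_Q(b+l)}-V^{\sk}_{\kappa_Q(b)}|\le\kappa_Q(b+l)\le l/Q+1$, and $V^{\ll}$ is bounded by $g(L)$. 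Rearranging gives $\rho^2\le C\cdot pr/l$ for some $C=C(r,l,Q,L)$, whence $|la-mp|\le N_0\sqrt{p}$ and $|a-mp/l|<N\sqrt{p}$ with $N=N_0/l$.

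The main obstacle is removing the no-wrap assumption at $i=b+l$. If $k(b+l)\ne m$ then $\phi_a(b+l)$ shifts by $\pm p$ and $Y$ by $\mp 2p$; if $[\phi_a(b+l)]_q$ wraps around $q$ then $Z$ shifts by $\pm 2q$. In each such branch the subtracted identity picks up extra terms of size $\Theta(l)$ or $\Theta(q/r)$ respectively, and a naive size bound on the right-hand side is insufficient: these extras can be approximately balanced by placing $\rho$ near a specific alternate value of order $p/l$, corresponding to an $a$ with $|a-mp/l|$ of order $q/l$. Such alternative configurations must be ruled out by a second evaluation of the identity, for instance at $i=b-l$ or $i=b+l+1$, which yields an independent constraint incompatible with the wrap branch; alternatively, one can invoke the conjugation symmetry $\phi_a\leftrightarrow\phi_{p-a}$ afforded by (\ref{eq:dSymm}) and Lemma~\ref{lem:J} to reduce to the no-wrap case.
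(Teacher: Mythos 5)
Your computational core is essentially the paper's: apply the recursion (\ref{eq:CorrRecurs}) twice to $d(L(p,q),\cdot)$, watch the quadratic term acquire the coefficient $\pm l/(pr)$ because $\frac{1}{qr}-\frac{1}{pq}=\frac{l}{pr}$, and bound everything else by a constant depending on $r,l,Q,L$. But there is a genuine gap, and you have put your finger on it yourself without closing it: the ``no-wrap'' assumption. The paper takes the step $i=b\to i=b+1$, so the shift in $\phi_a$ is exactly $a$ with $0<a<p/2$; no reduction mod $p$ is ever needed, and the reduction mod $q$ is \emph{by definition} the integer $m$ appearing in the statement (that is what the condition $0\le a+\frac{(1-\alpha)\zeta r+q-1}2-mq<q$ is for). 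Consequently there is no case to exclude: the wrap amount is not an obstruction but the very quantity $m$ relative to which the estimate is asserted. By taking the step $i=b\to i=b+l$ you shift $\phi_a$ by $la$, which can be as large as $lp/2$, so you must reduce mod $p$ (your $m$) \emph{and} then mod $q$ (a second parameter, call it $m''$), and your hypothesis $|\rho|\le(q-r)/2$ amounts to $m''=0$. This does not follow from $0<a<p/2$, which only gives $|la-mp|\le p/2\approx lq/2$ for the minimizing $m$ --- a factor of $l$ too weak. In the branch $m''\ne0$ your own computation centers the quadratic constraint at $\rho\approx m''q$ rather than at $0$, which is consistent with $|a-mp/l|$ of order $q/l$, far larger than $N\sqrt p$; so the conclusion genuinely does not follow there.

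Your two proposed repairs are not carried out and are doubtful as stated. The conjugation symmetry has already been spent in reducing to $0<a<p/2$ and gives no control on $la\bmod p$. A ``second evaluation at $i=b+l+1$'' would have to be checked to actually contradict each wrap branch; note that the honest version of this idea --- evaluating at $i=b$ and $i=b+1$ --- is precisely the paper's proof, at which point the detour through step $l$ buys nothing. Note also that in the paper the no-wrap statements for step $l$ (the inequalities (\ref{eq:ikjk}) in Lemma~\ref{lem:Ak+B+C}) are \emph{deduced from} Lemma~\ref{lem:aEstimate}, so assuming them while proving Lemma~\ref{lem:aEstimate} is circular in spirit. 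A secondary issue: your $m$ (the mod-$p$ wrap of $la$) is a priori a different integer from the $m$ fixed in the statement, so even in the no-wrap branch you would still need a short argument identifying the two for $p$ large.
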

\begin{proof}
It follows from (\ref{eq:Vi}) and (\ref{eq:Expected}) that
\begin{equation}\label{eq:delta02}
\delta^{\varepsilon}_a(b+1)-\delta^{\varepsilon}_a(b)=0\text{ or }\pm2.
\end{equation}

Using (\ref{eq:delta^eps}), (\ref{d invariants of L(p,q)}) and (\ref{eq:phi}), we get
\begin{eqnarray}
&&\delta^{\varepsilon}_a(b+1)-\delta^{\varepsilon}_a(b)\nonumber\\
&=&
\frac{2b+2-p-Q}{pQ}-d(L(Q,[p]_\lq),[b+1]_\lq)+d(L(Q,[p]_\lq),[b]_\lq)+2\varepsilon( V^\ll_{\kappa(\phi_a(b+1))}- V^\ll_{\kappa(\phi_a(b))})\nonumber\\
&&-\varepsilon \big(d(L(p,q),a+\frac{(1-\alpha)p+q-1}2)- d(L(p,q),\frac{(1-\alpha)p+q-1}2)\big).\label{eq:Recurs1}
\end{eqnarray}

When $\zeta=1$, by the recursive formula (\ref{eq:CorrRecurs}), we have
(note that $a+\frac{(1-\alpha)p+q-1}2<p+q$)
\begin{eqnarray*}
&&d(L(p,q),a+\frac{(1-\alpha)p+q-1}2)- d(L(p,q),\frac{(1-\alpha)p+q-1}2)\\
&=&\frac{(2a-\alpha p)^2-(\alpha p)^2}{4pq}-d(L(q,r),a-mq+\frac{(1-\alpha) r+q-1}2)+d(L(q,r),\frac{(1-\alpha) r+q-1}2)\\
&=&\frac{a^2-a\alpha p}{pq}-\frac{(2a-2mq-\alpha r)^2-(\alpha r)^2}{4qr}\\
&&+d(L(r,[q]_r),[a-mq+\frac{(1-\alpha)r+q-1}2]_r)-d(L(r,[q]_r),
[\frac{(1-\alpha)r+q-1}2]_r)\\
&=&-\frac{l}{pr}(a-\frac{mp}l)^2+\frac{m^2}l-m\alpha\\
&&+d(L(r,[q]_r),[a-mq+\frac{(1-\alpha)r+q-1}2]_r)-d(L(r,[q]_r),
[\frac{(1-\alpha)r+q-1}2]_r).
\end{eqnarray*}
When $\zeta=-1$,
\begin{eqnarray*}
&&d(L(p,q),a+\frac{(1-\alpha)p+q-1}2)- d(L(p,q),\frac{(1-\alpha)p+q-1}2)\\
&=&\frac{(2a-\alpha p)^2-(\alpha p)^2}{4pq}-d(L(q,q-r),a-mq+\frac{-(1-\alpha) r+q-1}2)+d(L(q,q-r),\frac{-(1-\alpha) r+q-1}2)\\
&=&\frac{a^2-a\alpha p}{pq}-\frac{(2a-2mq+\alpha r-q)^2-(\alpha r-q)^2}{4q(q-r)}\\
&&+d(L(q-r,r),a-mq+\frac{-(1-\alpha)r+q-1}2)-d(L(q-r,r),\frac{-(1-\alpha)r+q-1}2)\\
&=&\frac{a^2}{pq}-\frac{\alpha a}q-\frac{(a-mq+\alpha r-q)(a-mq)}{q(q-r)}+\frac{(a-mq-(1-\alpha)r)(a-mq)}{(q-r)r}\\
&&-d(L(r,[q-r]_r),[a-mq+\frac{-(1-\alpha)r+q-1}2]_r)+
d(L(r,[q-r]_r), [\frac{-(1-\alpha)r+q-1}2]_r)\\
&=&\frac{l}{pr}(a-\frac{mp}l)^2+\frac{m^2}l-m\alpha\\
&&-d(L(r, [q]_r), [a-mq+\frac{-(1-\alpha)r+q-1}2]_r)+d(L(r,[q]_r),[\frac{-(1-\alpha)r+q-1}2]_r).
\end{eqnarray*}

Let
\begin{eqnarray*}
C_0&=&\frac{2b+2-p-Q}{pQ}-d(L(Q,[p]_\lq),[b+1]_\lq)+d(L(Q,[p]_\lq),[b]_\lq)+
2\varepsilon( V^\ll_{\kappa(\phi_a(b+1))}- V^\ll_{\kappa(\phi_a(b))})\\
&&-\varepsilon\zeta\left(
d(L(r, [q]_r), [a-mq+\frac{\zeta(1-\alpha)r+q-1}2]_r)-d(L(r,[q]_r),[\frac{\zeta(1-\alpha)r+q-1}2]_r)\right),
\end{eqnarray*}
then the right hand side of (\ref{eq:Recurs1}) becomes
$$\varepsilon\left(\zeta\frac{l}{pr}(a-\frac{mp}l)^2-\frac{m^2}l+m\alpha\right)+C_0$$

Using (\ref{eq:delta02}), we get
$$\frac{l}{pr}(a-\frac{mp}l)^2\le 2+|\frac{m^2}l-m\alpha|+|C_0|.$$
Clearly, $|C_0|$ and $m$ are bounded in terms of $r,l, Q,L$,
so the  conclusion of the lemma follows.
\end{proof}

\begin{lem}\label{lem:Ak+B+C}
Let $k$ be an integer satisfying
\begin{equation}\label{eq:kRange}
0\le k<\frac{p-(2l+1)r+l}{2Nl^2\sqrt{p}}-\frac1l.
\end{equation}
Let $$i_k=\frac{(1-\alpha)p+q-1}2+k(al-mp),\quad j_k=\frac{(1-\alpha)\zeta r+q-1}2+k(al-mp).$$
Then
$$\delta^{\varepsilon}_a(b+lk+1)-\delta^{\varepsilon}_a(b+lk)=Ak+B+C_k,$$
where
\begin{eqnarray*}
A&=&\varepsilon\zeta\cdot\frac{2(al-mp)^2}{pr}+\frac{2l}{pQ},\\
B&=&\varepsilon\left(\zeta\frac{l}{pr}(a-\frac{mp}l)^2-\frac{m^2}l+m\alpha\right),\\
C_k&=&\frac{2b+2-p-Q}{pQ}-d(L(Q,[p]_\lq),[b+lk+1]_\lq)+d(L(Q,[p]_\lq),[b+lk]_\lq)\\
&&+2\varepsilon(  V^\ll_{\kappa(\phi_a(b+lk+1))}- V^\ll_{\kappa(\phi_a(b+lk))})-\varepsilon\zeta\big(d(L(r,[q]_r),[a-mq+j_k]_r)-d(L(r,[q]_r),
[j_k]_r)\big).
\end{eqnarray*}
\end{lem}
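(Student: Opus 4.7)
The plan is to execute the same two-step recursion used in the proof of Lemma~\ref{lem:aEstimate}, but now at the arguments $b+lk$ and $b+lk+1$ in place of $b$ and $b+1$, and to track carefully the $k$-dependence carried by the quadratic terms of (\ref{eq:CorrRecurs}). By (\ref{eq:delta_a}) and Proposition~\ref{prop:Corr},
\[
\delta^{\varepsilon}_a(b+lk+1)-\delta^{\varepsilon}_a(b+lk)=\Delta_1-\varepsilon\Delta_2+2\varepsilon\Delta_3,
\]
where $\Delta_1=d(L(p,Q),b+lk+1)-d(L(p,Q),b+lk)$, $\Delta_2=d(L(p,q),\phi_a(b+lk+1))-d(L(p,q),\phi_a(b+lk))$, and $\Delta_3=V^\ll_{\kappa(\phi_a(b+lk+1))}-V^\ll_{\kappa(\phi_a(b+lk))}$. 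The term $2\varepsilon\Delta_3$ is already a summand of $C_k$.

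For $\Delta_1$, one application of (\ref{eq:CorrRecurs}) (the indices $b+lk,b+lk+1$ stay in $[0,p+Q)$ thanks to (\ref{eq:kRange})) produces a quadratic part $\frac{2(b+lk)+2-p-Q}{pQ}$, which I split as $\frac{2b+2-p-Q}{pQ}+\frac{2l}{pQ}\,k$: the second summand contributes the $\frac{2l}{pQ}$ term of $A$, while the first, together with the accompanying $\pm d(L(Q,[p]_\lq),\cdot)$ terms, falls into $C_k$.

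The substantive content is the analysis of $\Delta_2$. Since $alk\equiv k(al-mp)\pmod p$, the integers $i_k=\frac{(1-\alpha)p+q-1}{2}+k(al-mp)$ and $i_k+a$ are specific lifts of $\phi_a(b+lk),\phi_a(b+lk+1)$ mod $p$; the bound $|a-mp/l|<N\sqrt{p}$ from Lemma~\ref{lem:aEstimate} together with (\ref{eq:kRange}) puts these lifts in $[0,p+q)$, so (\ref{eq:CorrRecurs}) applies directly. Applying it once yields a top-level quadratic $\frac{a^2}{pq}-\frac{\alpha a}{q}+\frac{2as}{pq}$ with $s=k(al-mp)$, plus a $d(L(q,[p]_q),\cdot)$ difference. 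In the $\zeta=1$ case $[p]_q=r$; I lift the arguments of this difference to $j_k$ and $a-mq+j_k$ (which the choice of $m$ and (\ref{eq:kRange}) again place inside $[0,q+r)$) and apply (\ref{eq:CorrRecurs}) a second time, producing a second-level quadratic $-\frac{\alpha(a-mq)}{q}+\frac{2(a-mq)s}{qr}+\frac{(a-mq)^2}{qr}$ and the residual $d(L(r,[q]_r),\cdot)$ terms that form the last summand of $C_k$. The $\zeta=-1$ case is parallel but with $[p]_q=q-r$, followed by one more reduction to $d(L(r,[q]_r),\cdot)$ as in the proof of Lemma~\ref{lem:aEstimate}; the effect is a sign flip that becomes the $\varepsilon\zeta$ factor appearing in $A$ and $B$.

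It remains to collect the quadratic pieces. The $s$-dependent contributions satisfy, for $\zeta=1$,
\[
\tfrac{2as}{pq}-\tfrac{2(a-mq)s}{qr}=\tfrac{2s\,(ar-p(a-mq))}{pqr}=-\tfrac{2s(al-mp)}{pr}=-\tfrac{2(al-mp)^2}{pr}\,k,
\]
so after multiplication by $-\varepsilon$ this yields $\varepsilon\zeta\tfrac{2(al-mp)^2}{pr}k$, completing the formula for $Ak$. The $k$-independent quadratic pieces collapse, via the identity $\tfrac{(a-mq)^2}{qr}-\tfrac{a^2}{pq}=\tfrac{l}{pr}\bigl(a-\tfrac{mp}{l}\bigr)^2-\tfrac{m^2}{l}$ (for $\zeta=1$; the analogous identity for $\zeta=-1$ is implicit in the proof of Lemma~\ref{lem:aEstimate}), into exactly $B$. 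The main obstacle is the bookkeeping in the two recursions: verifying that the chosen integer lifts $i_k,\,i_k+a$ and $j_k,\,a-mq+j_k$ genuinely lie in the intervals $[0,p+q)$ and $[0,q+[p]_q)$ required by (\ref{eq:CorrRecurs}) under the hypothesis (\ref{eq:kRange}) (so no extra $J$-symmetry correction intrudes), and verifying that the $\zeta=-1$ recursion flips the second-level quadratic's sign in exactly the manner that produces the $\varepsilon\zeta$ factors in both $A$ and $B$.
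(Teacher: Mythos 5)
Your proposal is correct and follows essentially the same route as the paper: express the difference via (\ref{eq:delta^eps}), use (\ref{eq:kRange}) and Lemma~\ref{lem:aEstimate} to place the lifts $i_k,\,i_k+a$ and $j_k,\,j_k+a-mq$ in the ranges where the recursion (\ref{eq:CorrRecurs}) applies, apply it twice, and collect the linear-in-$k$ and constant quadratic pieces into $A$ and $B$ (your telescoping identities are exactly the computations the paper carries out inline). The only part you defer --- the interval verification --- is done in the paper by the chain $(lk+1)N\sqrt{p}<\frac{p-(2l+1)r+l}{2l}\le\frac{q-2r+1}{2}$, which is precisely the estimate you indicate.
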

\begin{proof}
By  (\ref{eq:kRange}), we have
\begin{equation}\label{eq:lk+1bound}
(lk+1)N\sqrt{p}<\frac{p-(2l+1)r+l}{2l}\le\frac{q-2r+1}2.
\end{equation}

It follows from (\ref{eq:aCond}), (\ref{eq:lk+1bound}) and Lemma~\ref{lem:aEstimate} that
\begin{equation}\label{eq:ikjk}
0\le i_k< i_k+a<p+q,\qquad 0\le j_k,j_k+a-mq<q.
\end{equation}
For example,
\begin{eqnarray*}
j_k+a-mq&=&j_k+a-m\frac{p-\zeta r}l\\
&=&\frac{(1-\alpha)\zeta r+q-1}2+(lk+1)(a-\frac{mp}l)+\frac{m\zeta r}l\\
&<&\frac{r+q-1}2+\frac{q-2r+1}2+\frac r2\\
&=&q.
\end{eqnarray*}
The other inequalities can be verified similarly.

Using (\ref{eq:delta^eps}), we can compute
\begin{eqnarray}
&&\delta^{\varepsilon}_a(b+lk+1)-\delta^{\varepsilon}_a(b+lk)\nonumber\\
&=&\frac{2b+2lk+2-p-Q}{pQ}-d(L(Q,[p]_\lq),[b+lk+1]_\lq)+d(L(Q,[p]_\lq),[b+lk]_\lq)\nonumber\\
&&+2\varepsilon( V^\ll_{\kappa(\phi_a(b+lk+1))}- V^\ll_{\kappa(\phi_a(b+lk))})-\varepsilon \big(d(L(p,q),i_k+a)- d(L(p,q),i_k)\big).\label{eq:Recurs2}
\end{eqnarray}

As in the proof of Lemma~\ref{lem:aEstimate}, using (\ref{eq:ikjk}) and the recursion formula  (\ref{eq:CorrRecurs}), when $\zeta=1$, we can compute
\begin{eqnarray*}
&&d(L(p,q),i_k+a)- d(L(p,q),i_k)\\
&=&\frac{(2i_k+2a+1-p-q)^2-(2i_k+1-p-q)^2}{4pq}-d(L(q,r),j_k+a-mq)+d(L(q,r),j_k)\\
&=&\frac{a(2k(al-mp)+a-\alpha p)}{pq}-\frac{(2j_k+2a-2mq+1-q-r)^2-(2j_k+1-q-r)^2}{4qr}\\
&&+d(L(r,[q]_r),[j_k+a-mq]_r)-d(L(r,[q]_r),[j_k]_r)\\
&=&-\frac{2(al-mp)^2}{pr}k-\frac{l}{pr}(a-\frac{mp}l)^2+\frac{m^2}l-
m\alpha+d(L(r,[q]_r),[j_k+a-mq]_r)-d(L(r,[q]_r),[j_k]_r).
\end{eqnarray*}
Similarly, when $\zeta=-1$, we get
\begin{eqnarray*}
&&d(L(p,q),i_k+a)- d(L(p,q),i_k)\\
&=&\frac{2(al-mp)^2}{pr}k+\frac{l}{pr}(a-\frac{mp}l)^2+\frac{m^2}l-
m\alpha-d(L(r,[q]_r),[j_k+a-mq]_r)+d(L(r,[q]_r),[j_k]_r).
\end{eqnarray*}
So the right hand side of (\ref{eq:Recurs2}) is $Ak+B+C_k$.
\end{proof}

We can now finish the proof of Theorem~\ref{thm:BoundSlope}.
If $S^3_{K}(p/Q)\cong \varepsilon S^3_L(p/q)$, then (\ref{eq:Expected}) holds, so
\begin{equation}\label{eq:delta02lk}
\delta^{\varepsilon}_a(b+lk+1)-\delta^{\varepsilon}_a(b+lk)=0\text{ or }\pm2
\end{equation}
for all $k$ satisfying (\ref{eq:kRange}).

Let $A,B,C_k$ be as in Lemma~\ref{lem:Ak+B+C}. By (\ref{eq:NotSquare}),
 $A\ne0$. So $Ak+B+C$ is equal to 0 or $\pm 2$ for at most three values of $k$ for any given $C$.
From the expression of $C_k$, it is evident that
 there exists a constant integer $M=M(L)$, such that given $p,q,a,\varepsilon,\zeta$, as $k$ varies, $C_k$ can take at most $MQr$ values.  Thus  $Ak+B+C_k$ can be 0 or $\pm2$,
  i.e. (\ref{eq:delta02lk})  holds,  for
 at most $3MQr$ values of $k$.
But if $p\ge 4l^2N^2(3lMQr+2)^2$, then
 each of $k$ in $\{0,1,2,\dots,3MQr\}$ satisfies (\ref{eq:kRange})
 and thus (\ref{eq:delta02lk}) holds for each of these $3MQr+1$ values of  $k$.
This contradiction shows that $p$  is bounded above by $4l^2N^2(3lMQr+2)^2$.


\section{Seifert surgeries}\label{section 234 and 34r}

In this section we prove Theorem \ref{euclidean}, Addendum  \ref{Addendum}
and Theorems \ref{23r or 34r} and \ref{surgeries on torus knots}.

\begin{lem}\label{23 or 34}If $W$ is an oriented   Seifert fibred space whose base orbifold is $S^2(2,3,r)$ (or $S^2(3,4,r)$), $r>1$, then $W$ is
homeomorphic to some surgery on the torus  knot $T(3,2)$ (resp.
$T(4,3)$), i.e.  $$W\cong \varepsilon S^3_{T(3,2)}(\frac{6q+\zeta r}{q})
\;\;\; (\mbox{resp.}\;\; W\cong \varepsilon S^3_{T(4,3)}(\frac{12q+\zeta r}{q}))$$
for some $\varepsilon,\zeta\in \{1, -1\}$ and some positive integer $q$.
\end{lem}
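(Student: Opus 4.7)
The plan is to realize any given Seifert fibred space $W$ as an explicit Dehn surgery on $T(3,2)$ (or $T(4,3)$) by matching Seifert invariants. The two cases are parallel, so I focus on the base $S^2(2,3,r)$; the $S^2(3,4,r)$ case is identical with $T(4,3)$ (regular fibre slope $12$) replacing $T(3,2)$.

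Recall that the exterior $E = S^3 \setminus \nu(T(3,2))$ carries a Seifert fibration over $D^2$ with exceptional fibres of multiplicities $2$ and $3$, and its regular fibre on $\partial E$ has slope $6$ in standard meridian--longitude coordinates. Thus for any slope $P/Q$ with $\gcd(P,Q)=1$ and $P/Q\ne 6$, the Dehn filling uniquely extends the Seifert fibration, giving $S^3_{T(3,2)}(P/Q)$ a Seifert structure over $S^2$ with three (possibly degenerate) exceptional fibres of multiplicities $2$, $3$, and $|P-6Q|$. So the surgeries whose base orbifold is precisely $S^2(2,3,r)$ are exactly those with $|P-6Q|=r$, that is $P/Q = (6q+\zeta r)/q$ for $\zeta\in\{\pm 1\}$ and $q>0$ coprime to $r$; the two orientations are tracked by $\varepsilon\in\{\pm 1\}$.

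The converse is the substantive content. By the standard classification of Seifert fibred spaces, an oriented $W$ over $S^2(2,3,r)$ is determined up to orientation-preserving homeomorphism by its Seifert invariants $\bigl(b_0;\,(2,\beta_1),\,(3,\beta_2),\,(r,\beta_3)\bigr)$ modulo the moves $(b_0,\beta_i)\mapsto(b_0-k_i,\,\beta_i+k_ia_i)$, or equivalently by the coprime residues $\beta_i\bmod a_i$ together with the Euler number $e(W) = -b_0 - \beta_1/2 - \beta_2/3 - \beta_3/r$. I would then compute, via the standard recipe for Dehn filling on a Seifert fibred knot exterior (cf.~\cite{Moser}), the Seifert invariants of $S^3_{T(3,2)}((6q+\zeta r)/q)$ explicitly as functions of $(q,\zeta)$, and verify that as $q$, $\zeta$, $\varepsilon$ vary, every admissible invariant tuple for base $S^2(2,3,r)$ is realized.

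The main obstacle is this last surjectivity check: the coprimality constraint $\gcd(q,r)=1$ prevents $q$ from taking every residue class modulo $r$, so one must confirm that the two sign choices $\zeta,\varepsilon\in\{\pm 1\}$ together with the freedom to translate $q$ by multiples of $6r$ suffice to realize every coprime residue of $\beta_3\bmod r$ and every Euler number of the required form. This reduces to a short modular-arithmetic verification once the explicit formulas for $b_0(q,\zeta)$ and $\beta_3(q,\zeta)$ are in hand. The analogous statement for $T(4,3)$ and base $S^2(3,4,r)$ follows by replacing $6$ by $12$ throughout.
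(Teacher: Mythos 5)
Your strategy runs in the opposite direction from the paper's, and the step you defer is exactly the content of the lemma. You propose to compute the Seifert invariants of every surgery $S^3_{T(3,2)}\bigl((6q+\zeta r)/q\bigr)$ and then check that every admissible tuple $\bigl(b_0;(2,\beta_1),(3,\beta_2),(r,\beta_3)\bigr)$ is realized as $q,\zeta,\varepsilon$ vary. But that surjectivity claim is precisely what the lemma asserts, and you do not prove it: you state that it ``reduces to a short modular-arithmetic verification once the explicit formulas \dots are in hand,'' without producing the formulas or the verification. The check is not as innocuous as you suggest. One must simultaneously hit every unit residue $\beta_3 \bmod r$, both values of $\beta_2\bmod 3$, and every integer value of $b_0$ (equivalently every Euler number in the relevant coset of $\mathbb Z$), using only the one-parameter family $q$ constrained by $\gcd(q,r)=1$ together with the two signs; the residues of $q$, the residue of $6q+\zeta r$ modulo $6$ and modulo $r$, and the Euler number $|e|=|6q+\zeta r|/(6r)$ are all coupled, so getting the bookkeeping wrong (e.g.\ using an incorrect Euler-number formula for the surgered manifold) can easily make the map appear non-surjective. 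As written, the proposal is a plan whose decisive step is missing.

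For comparison, the paper avoids this computation entirely by arguing in the other direction: given $W$ over $S^2(2,3,r)$, remove a fibred neighborhood of the order-$r$ exceptional fibre. The complement is a Seifert fibred space over $D^2(2,3)$, and there is only one such space up to (possibly orientation-reversing) homeomorphism, namely the exterior of $T(3,2)$ in $S^3$. Hence $W$ is a Dehn filling of the trefoil exterior, i.e.\ a surgery on $T(3,2)$, and the slope must have distance $r$ from the fibre slope $6$, giving $(6q+\zeta r)/q$. This one-line uniqueness argument replaces your surjectivity check; if you want to salvage your route, you must actually write out the Seifert invariants of $S^3_{T(3,2)}\bigl((6q+\zeta r)/q\bigr)$ (say via Moser) and carry out the case analysis in full.
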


\begin{proof} The proof is a quick generalization of that of \cite[Lemma 3.1]{LiNi}.
The Seifert space $W$ has three singular fibres of orders
$2,3, r$ (resp. $3,4,r$)  respectively. The exterior of the singular fiber of order $r$ in $W$ is  homeomorphic (not necessarily orientation preserving)  to the exterior of the torus  knot $T(3,2)$ (resp. $T(4,3)$) in $S^3$
because there is only one Seifert fibred space (up to homeomorphism) with base orbifold $D^2(2,3)$ (resp.  $D^2(3,4)$). Now  on $T(3,2)$ (resp. $T(4,3)$),
a surgery gives Seifert fibred space with  base orbifold $S^2(2,3,r)$ (resp. $S^2(3,4, r)$) if and only if
the slope is $\frac{6q+\zeta r}{q}$ (resp. $\frac{12q+\zeta r}{q}$),  $\gcd(q,r)=1$. We may assume $q>0$ up to change the sign of $\zeta$.
\end{proof}

The following proposition classifies satellite knots in $S^3$ which admit
Nil Seifert surgeries.

\begin{prop}\label{satellite}
Suppose $K$ is a satellite knot and $S^3_K(p/q)$ is a Nil Seifert fibred space with $p/q>0$.
Then $K$ is a cable over $T(3,2)$. More precisely, there are four cases for the cable type and the slope:
$$
\begin{tabular}{|c|c|}
\hline
\text{cable type} & p/q\\
\hline
(29,5) &144/1\\
\hline
(31,5) &156/1\\
\hline
(41,7) &288/1\\
\hline
(43,7) &300/1\\
\hline
\end{tabular}
$$
\end{prop}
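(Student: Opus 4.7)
The plan is to exploit the JSJ structure of $E(K)$. Since $K$ is a satellite, $E(K)$ contains an essential torus; and since $S^3_K(p/q)$ is Nil Seifert fibred (hence irreducible and atoroidal, as $S^2(2,3,6)$ has no essential simple closed curves), the companion torus must become compressible after the $p/q$ Dehn filling. I first invoke the classical result that in this situation the outermost companion realizes $K$ as a cable knot, so write $K=C_{m,n}(K')$ with $\gcd(m,n)=1$ and $m,n\ge 2$.

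Given the cable structure, the filled pattern piece $C_{m,n}^{\mathrm{fill}}$ is Seifert fibred over a disk with exceptional fibres coming from the core (multiplicity $m$) and from the surgery filling (multiplicity $|p-qmn|$ when this is at least $2$). For the companion torus $T=\p\nu(K')$ to compress, $C_{m,n}^{\mathrm{fill}}$ must itself be a solid torus, which occurs precisely when $|p-qmn|=1$. In that case the standard cable surgery formula gives $S^3_K(p/q)\cong S^3_{K'}(p/(qn^2))$, and since $p=qmn\pm 1$ implies $\gcd(p,n)=1$, the slope $p/(qn^2)$ is in lowest terms.

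Next I identify $K'$. The surgery $S^3_{K'}(p/(qn^2))$ is Nil Seifert. If $K'$ were hyperbolic, Boyer's theorem cited in Section~\ref{sect:Intro} forces the surgery slope to be integral, i.e.\ $qn^2=1$; combined with $n\ge 2$ this is impossible. If $K'$ were itself a satellite, an iteration of the entire argument (writing $K'=C_{m',n'}(K'')$ and combining the Diophantine equations $|p-qmn|=1$, $|p-qn^2m'n'|=1$ and, ultimately, $|p-6qn^2n'^2|=6$) again forces $qn^2=1$; this elimination of iterated cables is the main technical step I anticipate. Hence $K'$ must be a torus knot $T(a,b)$, with base orbifold $S^2(a,b,|p-abqn^2|)$ for the resulting surgery; requiring this to equal $S^2(2,3,6)$ together with $\gcd(a,b)=1$ gives $\{a,b\}=\{2,3\}$ and $|p-6qn^2|=6$. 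Thus $K'=T(3,2)$.

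Finally, combining $|p-qmn|=1$ and $|p-6qn^2|=6$ yields $|qn(m-6n)|\in\{5,7\}$. Under $m,n\ge 2$, $\gcd(m,n)=1$ and $q\ge 1$, enumeration forces $q=1$, $n\in\{5,7\}$, $|m-6n|=1$, producing exactly the four cable types $(29,5),(31,5),(41,7),(43,7)$ with surgery slopes $144,156,288,300$ as listed in the table.
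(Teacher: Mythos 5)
There is a genuine gap at the very first step. You invoke ``the classical result that in this situation the outermost companion realizes $K$ as a cable knot,'' but no such result exists in that form. What the classical results actually give (Gabai \cite{GSurg}, combined with Scharlemann \cite{S}) is this: if $\partial V$ compresses in $V_K(p/q)$, then either $V_K(p/q)$ has a lens space connected summand (ruled out here because the surgered manifold is an irreducible Nil manifold), or $V_K(p/q)$ is a solid torus and $K$ is a $0$-bridge \emph{or} $1$-bridge braid in $V$. The $0$-bridge case is the cable case you analyze, but the $1$-bridge braid case is a genuine possibility that cannot be waved away: $1$-bridge braids are not cables, and they really do admit nontrivial solid-torus surgeries (this is the content of the Berge--Gabai examples), so compressibility of the companion torus does not force $K$ to be a cable. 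The paper excludes this case separately using Gabai's parametrization \cite[Lemma 3.2]{G1bridge}: for a $1$-bridge braid with winding number $w$ one must have $q=1$ and $p=\tau w+d$ with $0<\tau<w-1$, $0<d<w$, which is incompatible with the Moser condition $p=6w^2\pm6$ since $6w^2\pm6-\tau w-d\ge 5w^2-6>0$. Without some argument of this kind your proof does not establish that $K$ is a cable at all.

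A secondary issue is the elimination of iterated satellites, which you explicitly defer as ``the main technical step I anticipate'' without carrying it out. The paper sidesteps this entirely by a better choice of companion: take $C$ to be a companion of $K$ that is \emph{itself not a satellite}, so that once $V_K(p/q)$ is shown to be a solid torus with meridian slope $p/(w^2q)$ (via \cite[Lemma 3.3]{Go}), the induced surgery is on a knot that is either hyperbolic or a torus knot; Boyer's integrality result \cite{Boyer} kills the hyperbolic case exactly as in your argument, and Moser \cite{Moser} forces $C=T(3,2)$. This removes the need for any induction on nested companion tori. Your final Diophantine analysis (combining $p=qmn\pm1$ with $p=6qn^2\pm6$ to get $qn\mid 5$ or $7$, hence $q=1$, $n\in\{5,7\}$, and the four cable types) matches the paper's and is fine; the missing content is entirely in justifying that the pattern is a cable and in ruling out the $1$-bridge braid alternative.
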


\begin{proof}
Let $C$ be a companion knot of $K$ such that $C$ is itself not a satellite knot.
Let $V$ be a solid torus neighborhood of $C$ in $S^3$ such that
 $K$ is contained in the interior of $V$ but is not contained in a $3$-ball in $V$
 and is not isotopic to the core circle of $V$.
Let $N$ be a regular neighborhood of $K$ in $V$,
 $M_K=S^3-int(N)$,  $M_C=S^3-int(V)$, and let $V_K(p/q)$ be the $p/q$-surgery of $V$ along $K$.
Then $S^3_K(p/q)=M_K(p/q)=M_C\cup V_K(p/q)$.
Since $S^3_K(p/q)$ does not contain incompressible tori,
$\p V$ must be compressible in $S^3_K(p/q)$
and in fact compressible in $V_K(p/q)$. 
By \cite{GSurg}, it follows that either $V_K(p/q)$ has a connected summand $W$ with $0<|H_1(W)|<\infty$, or $V_K(p/q)$ is a solid torus.
In the former case, by \cite{S} $V_K(p/q)$ contains a lens space as a connected summand, which
contradicts the fact that $S^3_K(p/q)=M_K(p/q)=M_C\cup V_K(p/q)$ is a Nil Seifert fibred space.
Hence  $V_K(p/q)$ is a solid torus.
Now by \cite{GSurg}, $K$ is a $0$ or $1$-bridge braid in $V$ with winding number $w>1$.
By  \cite[Lemma 3.3]{Go} the meridian slope of the solid torus
$V_K(p/q)$ is $p/w^2q$ and thus $M_K(p/q)=M_C(p/w^2q)$.
So $C$ is a torus knot by \cite{Boyer} and then $C$ must be the trefoil knot $T(3,2)$ by \cite{Moser}.

If $K$ is a $(s,t)$-cable in $V$ (where we may assume $t>1$ is the winding number of $K$ in $V$),
then by \cite[Lemma 7.2]{Go}, $p=stq+\e_1$, $\e_1\in\{\pm 1\}$. So
 $M_K(p/q)=M_C((stq+\e_1)/(t^2q))$.
By \cite{Moser} we should have $stq+\e_1=6t^2q+\e_2 6$, $\e_2\in\{\pm1\}$.
So we have  $stq-6t^2q=\e_1 5$ or $-\e_1 7$, which implies
 $q=1$ and $t=5$ or $7$.

If $t=5$, then $s=30+\e_1$ and $p=5(30+\e_1)+\e_1$.
That is, either $K$ is the $(29,5)$-cable over $T(3,2)$, $q=1$ and $p=144$
or  $K$ is the $(31,5)$-cable over $T(3,2)$, $q=1$  and $p=156$.
Likewise if $t=7$, $K$ is the $(41,7)$-cable over $T(3,2)$, $q=1$  and $p=288$
or
$K$ is the $(43,7)$-cable over $T(3,2)$, $q=1$  and $p=300$.

Now suppose that $K$ is a $1$-bridge braid in $V$.
By \cite[Lemma 3.2]{G1bridge}, $q=1$ and $p=\t w+d$ where $w$ is the winding number of $K$ in $V$,
and $\t$ and $d$ are integers satisfying $0<\t<w-1$ and $0<d<w$.
Hence  $M_K(p/q)=M_C(\t w+d/w^2)$ and by \cite{Moser}
$\t w+d=6w^2\pm 6$. But  $6w^2\pm 6-\t w-d\geq 6w^2-6-(w-1)w-w =5w^2-6>0$.
We get a contradiction, which means $K$ cannot be a $1$-bridge braid in $V$.
\end{proof}

\begin{proof}[Proof of Theorem \ref{euclidean} and Addendum \ref{Addendum}]
Let $K$ be any non-trefoil knot in $S^3$ such that $S^3_K(p/q)$ is a Nil
Seifert space. Up to changing $K$ to its mirror image, we may assume that
$p,q>0$.
If $K$ is a torus knot, then by \cite{Moser}, no surgery on $K$
can produce a Nil Seifert fibred space.
So we may assume that $K$ is not a torus knot.
By \cite{Boyer} and Proposition \ref{satellite},
$q=1$.
We are now going to give a concrete upper  bound for $p$.
As noted in Section~\ref{sect:Intro}, the base orbifold of
 $S^3_K(p)$ is $S^2(2,3,6)$. Thus by Lemma \ref{23 or 34}, $S^3_K(p)\cong\varepsilon S^3_{T(3,2)}(p/q)$ with
$p=6q+\zeta 6$, for some $\varepsilon, \zeta\in \{1, -1\}$, and $q>0$.
As $p\ne 0$, $p/q>1=g(T(3,2))$ which implies that
  $S^3_K(p)\cong\varepsilon S^3_{T(3,2)}(p/q)$ is an L-space by \cite[Corollary~1.4]{OSzRatSurg}.
Therefore we may use surgery formula (\ref{d invariant}) instead of
Proposition \ref{prop:Corr}.
Now we  apply the proof of Theorem \ref{thm:BoundSlope}
 (and the notations established there)  to our current case
with $L=T(3,2)$, $Q=1$, $l=r=6$.
Then $m\in\{0,1,2,3\}$, $b\in \{0, p/2\}$, $V^\ll_i=b_i^{T(3,2)}$ (which is  $1$
if $i=0$ and $0$ if $i>0$), $V^\sk_i=b_i^\sk$ and
\begin{eqnarray*}
C_0&=&\frac{2b+2-p-1}{p}+
2\varepsilon( b^{T(3,2)}_{\kappa(\phi_a(b+1))}- b^{T(3,2)}_{\kappa(\phi_a(b))})\\
&&-\varepsilon\zeta\left(
d(L(6, [q]_6), [a-mq+\frac{\zeta(1-\alpha)6+q-1}2]_6)-d(L(6,[q]_6),[\frac{\zeta(1-\alpha)6+q-1}2]_6)\right),
\end{eqnarray*}
Using formula (\ref{d invariants of L(p,q)}) one can compute
\begin{equation}\label{eq:d of L(6,q)}
d(L(6, q), i)=\left\{\begin{array}{ll}
(\frac{5}{4}, \frac{5}{12}, \frac{-1}{12},
\frac{-1}{4}, \frac{-1}{12},\frac{5}{12}), & q=1, i=0, 1,...,5,\\&\\
(\frac{-5}{12}, \frac{1}{12}, \frac{1}{4},
\frac{1}{12}, \frac{-5}{12},\frac{-5}{4}),& q=5, i=0, 1,...,5.\end{array}\right.
\end{equation}
Thus $|C_0|\leq 1+2+\frac32<5$. Since $|\frac{m^2}6-m\alpha|\le2$ for $m=0,1,2,3$ and $\alpha=0,1$, we may take $N=3$.
Similarly recall the $A, B, $ and  $C_k$ in Lemma~\ref{lem:Ak+B+C}, and in our current case, $C_k$
 becomes
\begin{eqnarray*}
C_k&=&\frac{2b+2-p-1}{p}+
2\varepsilon( b^{T(3,2)}_{\kappa(\phi_a(b+lk+1))}- b^{T(3,2)}_{\kappa(\phi_a(b+lk))})\\
&&-\varepsilon\zeta\left(
d(L(6, [q]_6), [a-mq+j_k]_6)-d(L(6,[q]_6),[j_k]_6)\right),
\end{eqnarray*}
which can take at most $18$ values as $k$ varies.
Thus  the bound for $p$ is
$4\cdot 6^2\cdot3^2(3\cdot6\cdot18+2)^2$ when $A\ne 0$.

Now we just need to show that in our current case,
 $A$ is never zero.
 Suppose otherwise that $A=0$.
Then $\varepsilon\zeta=-1$, $(a-\frac{mp}{6})^2=1$, so $a-mq=\zeta m\pm 1$, and by Lemma~\ref{lem:Ak+B+C}
$$\begin{array}{l}
\delta^{\varepsilon}_a(b+lk+1)-\delta^{\varepsilon}_a(b+lk)
=B+C_k\\
=-\varepsilon\frac{m^2}{6}+\varepsilon m\a+(\mbox{$0$ or -1})+2\varepsilon( b^{T(3,2)}_{\kappa(\phi_a(b+lk+1))}- b^{T(3,2)}_{\kappa(\phi_a(b+lk))})
\\\quad +d(L(6, [q]_6), [\zeta m\pm 1+j_k]_6)-d(L(6,[q]_6),[j_k]_6).
\end{array}$$
Thus \begin{equation}\label{lr6case}-\varepsilon\frac{m^2}{6}+
d(L(6, [q]_6), \zeta m\pm 1+[3\zeta(1-\alpha)+\frac{q-1}2]_6)-d(L(6,[q]_6),[3\zeta(1-\alpha)+\frac{q-1}2]_6)\end{equation}
is integer valued.
Using (\ref{eq:d of L(6,q)}), we see
that for each of $m=0,1,2,3$,\quad $q\equiv1,5\pmod6$, $\a\in\{0,1\}$ and $\zeta\in\{1,-1\}$,
the expression given in (\ref{lr6case})
is never integer valued.
This contradiction proves the assertion that $A\ne 0$.

Now for the bounded region of integral slopes for  $p$,  one can use computer calculation to locate those possible integral slopes
and identify the corresponding Nil Seifert fibred spaces given in Theorem \ref{euclidean}, applying
 (\ref{d invariant}) (\ref{d invariants of L(p,q)}), which yields Theorem \ref{euclidean}.
  One can also recover the possible Alexander polynomials for the candidate knots
using formula (\ref{ai from bi}). The rest of Addendum \ref{Addendum} follows
from \cite{Moser}, Proposition~\ref{satellite}, \cite{Eudave}, 
and direct verification using SnapPy.
\end{proof}

\begin{proof}[Proof of Theorem~\ref{23r or 34r}]
Let $K$ be a hyperbolic knot in $S^3$
such that $S^3_K(p/Q)$ is a Seifert fibred space whose base orbifold
is $S^2(2,3,r)$ (or $S^2(3,4,r)$).
By changing $K$ to its mirror image, we may assume that both
$p$ and $Q$ are positive integers.
By \cite{LM}  we   have $Q\leq 8$. So we just need to show that $p$ is bounded above
(independent of hyperbolic $K$).

By Lemma~\ref{23 or 34},
$$S^3_K(p/Q)\cong  \varepsilon S^3_{T(3,2)}(\frac{6q+\zeta r}{q})\;\;
(\mbox{resp.}\;S^3_K(p/Q)\cong \varepsilon S^3_{T(4,3)}(\frac{12q+\zeta r}{q}))$$
for some $\varepsilon,\zeta\in \{1, -1\}$ and some positive integer $q$.
Now applying Theorem~\ref{thm:BoundSlope} with $l=6$ and $L=T(3,2)$ (resp.  $l=12$
and $L=T(4,3)$),
our desired conclusion is true when (\ref{eq:NotSquare}) holds, i.e.
$$\sqrt{\frac{6r}{Q}}\notin \z\;\; (\mbox{resp.}\;
\sqrt{\frac{12r}{Q}}\notin \z)$$
for each $Q=1,...,8$.
\end{proof}

\begin{proof}[Proof of Theorem~\ref{surgeries on torus knots}]
Let $K$ be a hyperbolic knot in $S^3$
such that $S^3_K(p/Q)\cong \varepsilon S^3_{T(m,n)}(\frac{mnq+\zeta r}{q})$.
Again $Q\leq 8$ and  Theorem~\ref{thm:BoundSlope} applies
with $l=mn$ and $L=T(m,n)$.
\end{proof}

\end{document}